\theoremstyle{plain}
\newtheorem{thm}{Theorem}[section]
\newtheorem{defn}{Definition}[section]
\theoremstyle{plain}
\theoremstyle{definition}
\newtheorem{asp}{Assumption}
\numberwithin{equation}{section}
\title[Sensitivities for path-dependent options]
 {Sensitivity Analysis of Path-Dependent Options in an Incomplete Market with Pathwise Functional It$\hat{\text{O}}$ Calculus}
\date{\today}
\begin{document}
\author{ Siboniso Confrence Nkosi }

\address{Department of Mathematics and Applied Mathematics, University of Limpopo, Private bag X1106, Sovenga, 0727, South Africa}

\email{siboniso.nkosi@ul.ac.za}

\author{ Farai Julius Mhlanga }

\address{Department of Mathematics and Applied Mathematics,
University of Limpopo, Private bag X1106, Sovenga, 0727, South Africa}

\email{farai.mhlanga@ul.ac.za}

\keywords{path-dependent options, pathwise functional It$\hat{\text{o}}$ calculus, Price sensitivities }





\begin{abstract}
Functional It{\^o} calculus is based on an extension of the classical It{\^o} calculus to functionals depending on the entire
past evolution of the underlying paths and not only on its current value. The calculus builds on F{\"o}llmer's deterministic proof
of the It{\^o} formula, see \cite{follmer1981calcul}, and a notion of pathwise functional derivatives introduced
by \cite{dupire2009functional}. There are no smoothness assumptions required on the functionals, however, they are required to
possess certain directional derivatives which may be computed pathwise, see \cite{cont2013functional,cont2012functional,schied2016associativity}. 
Using functional It{\^o} calculus and the notion of quadratic variation, we derive the functional It{\^o} formula along with the Feynman-Kac formula for functional processes. Furthermore, we express the Greeks for path-dependent options as expectations, which can be efficiently computed numerically using Monte Carlo simulations. We illustrate these results by applying the formulae to digital options within the Black-Scholes model framework.
\end{abstract}

\maketitle
\section{Introduction}
Many problems in stochastic analysis and its applications to mathematical finance involve the study of path-dependent functionals of stochastic processes. In financial markets, portfolio strategies are often constructed based on historical data rather than solely on the current market price or capitalisation. This naturally raises the question of whether a mathematical framework can be developed to analyse portfolios that depend on the entire past evolution of the market portfolio and other influencing factors.
\\
In his seminal work, F{\"o}llmer \cite{follmer1981calcul} introduced a pathwise proof of It{\^o} formula using the concept of quadratic variation along a sequence of partitions. This result has gained significant attention in the pathwise approach to stochastic analysis and it is flexible to be extended to path-dependent functionals. F{\"o}llmer's ideas laid the foundation for subsequent developments by Dupire \cite{dupire2009functional} and Cont and Fourni{\'e} \cite{cont2013functional, cont2010functional}, who introduced a novel stochastic calculus known as functional It{\^o} calculus. This framework extends the classical It{\^o} formula to functionals that depend on the entire past evolution of the underlying process rather than just its current value. While Dupire \cite{dupire2009functional} employed probabilistic arguments and It{\^o} calculus for functional representation, alternative approaches allow for analytical derivations without relying on probability. This has led to the development of pathwise functional It{\^o} calculus for non-anticipative functionals, which identifies the class of c{\'a}dl{\'a}g paths to which the calculus is applicable.
\\
Predicting the price of an option or a portfolio comprising multiple options remains a challenging task due to the market fluctuations that do not always correspond directly to changes in the underlying asset's price. Various factors influence option pricing, making it essential to understand and quantify their effects. The Greeks serve as crucial tools in this context, measuring the sensitivity of option prices by quantifying the impact of key parameters. In risk management, Greeks play a fundamental role in assessing a portfolio's sensitivity to small changes in underlying parameters, making their computation essential. For practical applications, deriving explicit formulas for Greeks is crucial for designing an efficient Monte Carlo evaluation. Various representations of Greeks have been evaluated using Malliavin calculus, including those for jump diffusion processes \cite{davis2006, mhlanga2015}, quanto options \cite{mhlanga2022}, spread options \cite{deel2010, mhlanga2023}, the infinite-dimensional Heston model \cite{dinunno2021}, and hybrid stochastic volatility models \cite{khatib2023, yilmaz2018}, to name a few.
\\
The paper by Jazaerli and Saporito \cite{jazaerli2017functional} presented a novel approach using the Lie bracket of space and time derivatives, classifying functionals by their degree of path-dependence. The Lie bracket here behaves as a measure of path-dependence, exposing the authors to an alternative approach for the computation of Greeks. They explored integration by parts techniques for numerical computation of Greeks, emphasizing cases with locally weakly and fully path-dependent functionals. Their work covers models with path-dependent volatility, which are not adequately addressed in the Malliavin calculus setup. The effectiveness of their framework is demonstrated using examples like Asian options and contracts with quadratic variation.
\\
Yu and Wang \cite{yu2023quasi} developed a generalised integration by parts formula in multi-dimensional Malliavin calculus and used it to derive Greeks for complex Asian options. They introduced a method combining Malliavin calculus with the Quasi-Monte Carlo (QMC) method, which enhances efficiency in computing option Greeks. Their approach smooths Malliavin Greeks estimates using conditional expectations, improving numerical stability and reducing variance. They computed Greeks for both simple and complex Asian options in a multi-asset setting, employing gradient principal component analysis and other variance reduction techniques to optimise QMC simulations. Their numerical experiments demonstrated significant improvements, particularly for Asian options with discontinuous payoffs.
\\
Yang \textit{et al.} \cite{yang2021analysis} derived closed-form matrix expressions for the Laplace transforms of price and Greeks of Asian options using pathwise differentiation and Malliavin calculus. They established exact second-order convergence rates of continuous-time Markov Chain (CTMC) methods when applied to the price and Greeks of Asian options, which was missing in literature. Their work introduced new error analysis techniques for CTMC methods applied to path-dependent derivatives, enhancing accuracy and efficiency. Furthermore, they generalised the CTMC framework to occupation-time derivatives, which are frequently used in financial markets such as equity and forex options. Numerical experiments validated the efficiency of the CTMC approach for path-dependent option pricing.
\\
Hudde and R{\"u}schendorf \cite{hudde2023european} computed closed-form expressions for European and Asian Greeks for general $L^2$-payoff functions. They employed Hilbert space-valued Malliavin calculus to derive stochastic weight formulas for European and Asian options. Their findings demonstrated that finite difference methods perform better for continuous payoff functions, whereas Malliavin Monte Carlo methods exhibit superior convergence for discontinuous payoff functions, such as digital options.
\\
The Malliavin calculus, a differential framework on Wiener space, has been instrumental in studying Brownian functionals with path-dependent features. However, in the Malliavin setup, accounting for more general dynamics of path-dependent volatility remains a challenge. Functional It{\^o} calculus has proven to be an effective tool in addressing these complexities, particularly in computing the Greeks. It{\^o} stochastic calculus has long been recognised as a powerful tool for analysing phenomena characterised by randomness and irregular time evolution \cite{alos2001stochastic,applebaum2009levy}. At its core, the It{\^o} formula enables the representation of stochastic integrals, making it key to modern stochastic analysis. Due to the broad range of applications spanning from physics to mathematical finance, significant efforts have been made to extend stochastic calculus to path-dependent functionals.
\\
The Malliavin calculus naturally leads to functional representations involving anticipative processes \cite{bally2003elementary,watanabe1984lectures}. However, its derivatives introduce perturbations that affect both past and future values of the process, making it less interpretable in contexts where the quantities are expected to be non-anticipative. This limitation underscores the importance of functional It{\^o} calculus in developing robust methodologies for analysing path-dependent stochastic systems in mathematical finance.
\\
The contribution of this paper is twofold. First, we derive a Feynmann-Kac type formula for functionals, extending the result in \cite{dupire2009functional}. Second, we use functional It{\^o}  calculus to compute the Greeks for path-dependent options, specifically Delta, Gamma, and Vega. Delta is defined as the derivative of the price of the path-dependent option with respect to the initial price. Gamma is defined as the second derivative of the price of the path-dependent option with respect to the initial price. Vega is defined as the derivative of the price of the path-dependent option with respect to the diffusion coefficient. A Lie bracket of time and space functional derivatives plays a fundamental role in deriving Greeks for path-dependent options. In addition, we provide the Black-Scholes versions of these Greeks as examples.
\\
The remainder of the paper is organised as follows. Section 2 introduces the fundamental properties of pathwise functional It{\^o} calculus and defines the Lie bracket. This section also classifies functionals as either locally weakly path-dependent or strongly path-dependent. Section 3 presents the asset price model driven by a Brownian motion and a jump process. It also provides the risk-neutral pricing formula for a path-dependent derivative with maturity $T$. Section 4 explores Functional It{\^o} Calculus, utilising quadratic variation to describe how functionals accumulate stochastic fluctuations over time. The section extends the classical It{\^o} formula to path-dependent functionals and derives the pricing partial differential equation (PDE). Section 5 focuses on the computation of Greeks, particularly the first-order Greeks (Delta and Vega) and the second-order Greek (Gamma). Section 6 examines a jump-diffusion model with zero interest rates, applying the previously derived formulae to compute Delta, Gamma and Vega for digital options. Section 7 concludes the paper and discusses potential directions for future research.

\section{A Primer on pathwise functional It{\^o} calculus}
In this section we recall some of the basic properties of pathwise functional It{\^o} calculus as highlighted in \cite{Blan2020}. We refer to Cont \cite{cont2012functional} for a detailed exposition on pathwise functional It{\^o} calculus.\\
Let $\mathcal D_T=D([0,T],\mathbb R^d)$ be the space of c{\'a}dl{\'a}g paths on $[0,T]$, where $T<\infty$. The value of the path $\omega\in \mathcal D_T)$  at a fixed time $t\in[0,T]$ is denoted by $\omega(t)$.
\begin{defn}
For a fixed time $t\in[0,T]$,  a stopped path $\omega_t$ is obtained by fixing the value of the path $\omega$ on $(t,T]$ at $\omega(t)$, that is,
\begin{displaymath}
\omega_t(s)=\omega(t\wedge s)=\left\{\begin{array}{ll} & \omega(s)~~\text{for}~~s\in[0,t],\\
& \omega(t)~~\text{for}~~s\in(t,T].
\end{array}\right.
\end{displaymath}
\end{defn}
Let ${\mathcal B}(A)$ denote the Borel $\sigma$-algebra on the set $A$. Let $\mathbb R_0^d:=\mathbb R^d -\{0\}$ and ${\mathcal M}_T:={\mathcal M}([0,T]\times\mathbb R_0^d)$ be the space of integer-valued Radon measures on $[0,T]\times\mathbb R_0^d$.
\begin{defn}
For a measure $j:\mathcal B([0,T]\times\mathbb R_0^d)\rightarrow\mathbb N\cup\{+\infty\},$ we say that
\[j\in{\mathcal M}_T \Leftrightarrow j(\cdot)=\sum_{i=1}^{\infty}\delta_{(t_i,z_i)}(\cdot)~~\text{and is finite on compacts} \]
with $(t_i)_{i\in\mathbb N}\in[0,T]^{\mathbb N}$ not necessarily distinct or ordered, and $(z_i)_{i\in\mathbb N}\in(\mathbb R_0^d)^{\mathbb N}$.
 \end{defn}
 The space $\mathcal M_T$ is equipped with $\sigma$-algebra $\mathcal F$ such that the mapping $j\rightarrow j(A)$ is measurable for all $A\in\mathcal B([0,T]\times\mathbb R_0^d)$, the Borel $\sigma$-algebra on $[0,T]\times\mathbb R_0^d$.
 \begin{defn}
For any $(t,j)\in[0,T]\times \mathcal M_T$ we denote $j_t$ the restriction of $j$ to $[0,t]$:
\[ \forall~A\in\mathcal B(\mathbb R_0^d),j_t([0,t]\times A)=j([0,t]\times A)\]
and $j_{t-}$ the restriction of $j$ to $[0,t)$:
\[ \forall~A\in\mathcal B(\mathbb R_0^d),j_{t-}([0,t]\times A)=j([0,t)\times A).\]
\end{defn}
\begin{defn}
We identify the space of processes
 \[\Omega:=\mathcal D_T \times \mathcal M_T\]
   equipped with the $\sigma$-algebra ${\mathcal F}$ defined as the $\sigma$-algebra on the product space such that for every $j\in \mathcal M_T$ and $A\in\mathcal B(\mathbb R_0^d)$, $j(A)$ is measurable.
   \end{defn}
We note that if $\mathcal D_T$ is equipped with the Skorohod topology and $\mathcal M_T$ with topology of weak convergence, both are separable, and there is no difference between the product Borel $\sigma$-algebra and the product of the Borel $\sigma$-algebras defined respectively on $\mathcal D_T$ and $\mathcal M_T$.\\
The canonical process $(\omega,J)$ is defined as follows: for any $(\omega,j)\in\Omega$,
\[(\omega,J)=(\omega_t(\cdot),j_t(\cdot)), \]
that is, the process consists of the trajectory and the measure $j$ both stopped at time $t$.
\begin{defn}
For a given $(j_t)_{t\in[0,T]}$, we define the filtration generated by the canonical process $(\omega,J):$ $\mathbb F:=(\mathcal F_t)_{t\in[0,T]}$ on $(\Omega,\mathcal F)$ as the increasing sequence of $\sigma$-algebras
\[  \mathcal F_t=\sigma(\omega_s(\cdot),J_s(\cdot), ~s\in[0,t]). \]
\end{defn}

Now, we define non-anticipative functionals on the space \cite{Blan2020}.
\begin{defn}
 A non-anticipative functional is a map
 \[F:[0,T]\times\Omega\rightarrow\mathbb R\]
 such that
 \begin{enumerate}
   \item $F(t,\omega,j)=F(t,\omega_t,j_t),$
   \item $F$ is measurable with respect to the product $\sigma$-algebra $\mathcal B([0,T]\times\mathcal F)$,
   \item For all $t\in[0,T]$, $F(t,\cdot)$ is $\mathcal F_t$-measurable.
 \end{enumerate}
 \end{defn}
 \begin{defn}
 A predictable functional $F$ is a non-anticipative functional such that
 \[ F(t,z,\omega,j)=F(t,z,\omega_t,j_t)=f(t,z,\omega_t,j_{t-}),\]
 and so $F(t,z,\cdot,\cdot)$ is $\mathcal F_t$-measurable.
 \end{defn}
 \begin{defn}
 A non-anticipative functional field $\Psi$ is a map
 \[\Psi:[0,T]\times\mathbb R_0^d\times\Omega\rightarrow\mathbb R\]
 such that
 \begin{enumerate}
   \item $\Psi(t,z,\omega,j)=\Psi(t,z,\omega,j_t)$,
   \item $\Psi$ is measurable with resoect to the product $\sigma$-algebra $\mathcal B([0,T]\times\mathbb R_0^d)\times\mathcal F$,
   \item for all $(t,z)\in[0,T]\times\mathbb R_0^d$, $\Psi(t,z,\cdot)$ is $\mathcal F_t$-measurable.
 \end{enumerate}
  \end{defn}

%
\begin{defn}[Boundedness-preserving functionals]
A non-anticipative functional $F$ is boundedness-preserving if, for every compact set $K\subset\mathbb{R}^d$, there exists a constant $C(K,t_0)>0$, such that $|F(t,Y_t)|<C(K,t_0)$, for every path $Y_t$ satisfying $Y_t([0,t])=\{y\in\mathbb R: Y_t(s)=y~~\text{for some}~~s\in[0,t]\}\subset K.$
\end{defn}
We denote by $\mathbb{B}(\Omega)$ the set of boundedness-preserving non-anticipative functionals.\\

We now recall some notions of differentiability for functionals \cite{Blan2020, cont2012functional, dupire2009functional}. We consider a path $\omega\in \Omega$. For $h\geq0$, the horizontal extension of a stopped path $(t,\omega_t,j_t)$ to $[0,t+h]$ is the stopped path $(t+h,\omega_t,j_t)$.
\begin{defn}
A non-anticipative functional $F:\Omega\to\mathbb{R}$ is said to be horizontally differentiable at $(t,\omega,j)\in[0,T]\times\Omega$ if the following limit (called the horizontal derivative) exists
\begin{align}\label{hor_der}
\lim_{h\to0^+}\frac{F(t+h,\omega_t,j_t)-F(t,\omega_t,j_t)}{h}=:\mathcal{D}F(t,\omega,j).
\end{align}
 If $F$ is horizontally differentiable at each $(t,\omega,j)\in [0,T]\times\Omega$ then the map $\mathcal{D}F: (t,\omega,j)\rightarrow\mathcal {D}F(t,\omega,j)$ defines a non-anticipative functional, called the horizontal derivative of $F$.
\end{defn}

We consider $h\in\mathbb R$ and define the vertical perturbation $\omega_t^h$ of $\omega_t$ as a c{\'a}dl{\'a}g path obtain by shifting the path $\omega$ by $h$ at time $t$,
\[\omega_t^h=\omega_t(\cdot)+h\mathbf{1}_{[t,\infty]}(\cdot) \] where $\mathbf{1}_{[t,\infty]}$ is the indicator function.
\begin{defn}
A non-anticipative functional $f$ is vertically differentiable at $(t,\omega,j)\in[0,T]\times\Omega$ if, for $(e_i)_{i\in 1,...,d}$ the canonical basis of $\mathbb R^d$, and $h>0$ real, the following limit exists
\[\partial_iF(t,\omega,j)=\lim_{h\rightarrow0}\frac{F(t,\omega_t^{he_i},j_t)-F(t,\omega_t,j_t)}{h},~~i=1,...,d. \]
The vertical derivative at $(t,\omega,j)\in\Omega$ is then defined as
\[\nabla_\omega F(t,\omega,j)=\left(\partial_iF(t,\omega,j),~i=1,\ldots,d\right). \]
If $F$ is vertically differentiable at each $(t,\omega,j)$, then $\nabla_{\omega}F(t,\omega,j)$ defines a non-anticipative functional called the vertical derivative of $F$.
\end{defn}
The following definitions defines a functional derivative with respect to the jump component.
\begin{defn}
For a  non-anticipative functional $F$ and $(t,z)\in[0,T]\times\mathbb R_0^d$ we have
\[ \nabla_{t,z}F(t,\omega,j)=F(t,\omega_t,j_{t-}+\delta_{(t,z)})-F(t,\omega_t,j_{t-}).\]
Then the operator
\begin{align*}
(\nabla_JF)(t,\omega,j):&[0,T]\times\Omega\to\mathbb R\\
&(t,z,\omega,j)\mapsto\nabla_JF(t,z,\omega,j)
\end{align*}
defined by
\[ (\nabla_JF)(t,z,\omega,j)=\nabla_{t,z}F(t,\omega,j)=F(t,\omega_t,j_{t-}+\delta_{(t,z)})-F(t,\omega_t,j_{t-}) \]
maps a non-anticipative functional $F$ to a predictable functional field denoted $\nabla_JF$.
\end{defn}

We conclude this section by defining the functional Lie bracket. Unlike the usual partial derivatives in finite dimensions, the horizontal and vertical derivatives do not commute. In general
\[\mathcal{D}(\nabla_\omega F)\neq\nabla_\omega(\mathcal{D}F). \]
This arises from the fact that the elementary operators of horizontal extension and vertical perturbation defined earlier on do not commute, that is, a horizontal extension of the stopped path $(t,\omega)$ to $t+h$ followed by a vertical perturbation yields the path $\omega_t+e\mathbf{1}_{[t+h,t]}$ while a vertical perturbation at $t$ followed by a horizontal extension to $t+h$ yields
\[\omega_t+e\mathbf{1}_{[t,t]}\neq\omega_t+e\mathbf{1}_{[t+h,t].} \]
The following definition can be used to quantify the path-dependency of $F$.
\begin{defn}\label{Lie21}
The Lie bracket of the operators $\mathcal{D}$ and $\nabla_\omega$ is defined by
\[\mathfrak{L}F(t,\omega_t):=[\mathcal{D},\nabla_\omega]F(t,\omega_t)=\nabla_\omega(\mathcal{D}F)(t,\omega)-\mathcal{D}(\nabla_\omega F)(t,\omega), \]
where $F$ is such that all derivatives above exists.
\end{defn}

\begin{defn}
A function $F:\Lambda\rightarrow\mathbb R$ is called
\begin{enumerate}
  \item locally weakly path-dependent if $\mathfrak{L}F=0$, and
  \item strongly path-dependent if $\forall [s,t]\subset[0,T],~\exists u\in[s,t]$ such that $\mathfrak{L}F(Y_u)\neq0$.
\end{enumerate}
\end{defn}
\section{Mathematical setup}
Consider a $d$-dimensional Brownian motion $W$ and a Poisson random measure $N$ on $[0,\infty)\times\mathbb R$ with intensity measure $dt\times\nu(dx)$ defined on a probability space $(\Omega,{\mathcal F},P)$, where $\nu$ is a positive measure on $\mathbb R$ such that $\int_{\mathbb R}(1\wedge x^2)\nu(dx)<\infty$. $\widetilde{N}$ denotes the compensated version of $N$:
\[ \widetilde{N}(\mathrm{d}t,\mathrm{d}z)=N(\mathrm{d}t,\mathrm{d}z)-\mathrm{d}t\nu(\mathrm{d}z).\]
Let $({\mathcal F}_t)_{t\geq0}$ stand for the natural filtration of $W$ and $N$ completed with null sets.\\
We consider a stochastic process $x$ described by the following stochastic integral equation
\begin{equation}\label{a}
x_s=x_0+\int_0^sb(X_{u})\,\mathrm{d}u+\int_0^s\sigma(X_{u})\,\mathrm{d}w_u+\int_0^s\int_{\mathbb R}\gamma(z,X_{u-})\widetilde{N}(\mathrm{d}u,\mathrm{d}z)
\end{equation}
with $s\geq t$ and $X_t=Y_t$, where $b(x)$ $\sigma(x)$ and $\gamma(z,x)$ are deterministic functions. The process $(w_s)_{s\in[0,T]}$ represents a standard Brownian motion. We assume that the coefficients satisfy the following assumptions which guarantee the existence and uniqueness of a solution to the stochastic integral equation (\ref{a}):
\begin{enumerate}
  \item  For all $x\in\mathbb R^d$ and $t\in[0,T]$, there exists a constant $C_1<\infty$ such that
  \[ \mid b(x)\mid^2+\mid\sigma(x)\mid^2+\int_{\mathbb R}\mid\gamma(z,x)\mid^2\nu(\mathrm{d}z)\leq C_1(1+\mid x\mid^2), \]
  \item  For all $x,y\in\mathbb R^d$ and $t\in[0,T]$, there exists a constant $C_2$ such that
  \[ \mid b(x) - b(y)\mid^2+\mid\sigma(x) - \sigma(y)\mid^2+\int_{\mathbb R}\mid \gamma(z,x) - \gamma(z,y)\mid^2\nu(\mathrm{d}z)\leq C_2\mid x-y\mid^2.\]
\end{enumerate}
Suppose the the dynamics of a stock price $x$, under the risk-neutral measure, is defined by the path-dependent model (\ref{a}). A no-arbitrage price of a path-dependent derivative with maturity $T$ and payoff given by the functional $g:\Omega\rightarrow\mathbb R$  is given by
\[f(Y_t)=\mathbb E\left[e^{-\int_t^Tr(Z_u)du}g(X_T)\mid Y_t\right],\]
where $r$ is instantaneous interest rate.
\section{Functional It$\hat{\text{o}}$ calculus}
A path $f: \Lambda\rightarrow\mathbb R$ is said to have a finite quadratic variation along a subdivision $\pi_n=(0=t_0^n<...<t_n^{k(n)}=T)$ if the measures
\[\xi^n=\sum_{i=0}^{k(n)-1}\left(f(t_{i+1}^n)-f(t_i^n)\right)^2\delta_{t_i^n}\]
where $\delta_t$ is the Dirac measure at $t$, converges weakly to a Radon measure $\xi$ on $[0,T]$ such that
\[[f]_t=\xi([0,t])=[f]_t^c+\sum_{0<s\leq t}\left(\triangle f(s)\right)^2\]
where $[f]^c$ is the continuous part of $[f]$. $[f]$ is called the quadratic variation of $f$ along the sequence $\pi_n$.
 \begin{thm}\label{mart1}
Let $x_t$ be a semimartingale. Let $f(x)$ be a function o the $\mathcal C^{2}$-class. Then, for any $t\in[0,T]$, $f(X_t)$ is again a semimartingale and the following holds:
\begin{eqnarray*} f(X_t)&=&f(X_0)+\int_0^t{\mathcal D}f(X_s)\,\mathrm{d}s+\int_0^t\nabla_{\omega}f(X_s)\,\mathrm{d}x_s+
\frac{1}{2}\int_0^t\nabla_{\omega}^2f(X_s)\,\mathrm{d}[x]_s^c\\
&&+\int_0^t\int_{\mathbb R}\left\{f(z,X_{s-}+\gamma)-f(z,X_{s-})-\gamma\nabla_{\omega}f(z,X_{s-})\right\}\nu(\mathrm{d}z)\,\mathrm{d}s\\
&&+\int_0^t\int_{\mathbb R}\{f(z,X_{s-}+\gamma)-f(z,X_{s-})\}\widetilde{N}(\mathrm{d}s,\mathrm{d}z)
\end{eqnarray*}
where we have suppressed the arguments of $\gamma$.
\end{thm}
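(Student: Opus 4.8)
I read the regularity hypothesis as $f\in\mathbb{C}^{1,2}_{b}$, i.e. $f$ is horizontally differentiable with continuous horizontal derivative $\mathcal Df$ and twice vertically differentiable with continuous, boundedness-preserving vertical derivatives $\nabla_\omega f,\nabla_\omega^2 f$, and I read $\int_0^t\nabla_\omega f(X_s)\,\mathrm dx_s$ as the integral against the drift-and-Brownian part $b(X_s)\,\mathrm ds+\sigma(X_s)\,\mathrm dw_s$. The plan is to run F\"ollmer's pathwise partition argument to generate the horizontal term and the first- and second-order vertical terms, to identify the resulting non-anticipative Riemann sums with the stated stochastic integrals, and to treat the jumps by truncation and compensation. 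By the growth and Lipschitz conditions imposed on $b,\sigma,\gamma$, equation (\ref{a}) has a unique c\`adl\`ag semimartingale solution whose trajectory $s\mapsto x_s$ is a.s. of finite quadratic variation along $\pi_n$, with $[x]_s^{c}=\int_0^s\sigma(X_u)^2\,\mathrm du$. First I would localise with $\tau_m=\inf\{s\ge0:|x_s|\ge m\}\wedge T$, so that the path is confined to a compact set $K$ on which $f,\nabla_\omega f,\nabla_\omega^2 f,\mathcal Df$ are bounded and uniformly continuous; the general case follows by letting $m\to\infty$ and applying dominated convergence.

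Next, fix a partition $\pi_n=(0=t_0<\dots<t_{k(n)}=t)$ of mesh tending to zero, arranged so as to contain the finitely many jump times of size $|z|>\eps$, and telescope
\[
f(X_t)-f(X_0)=\sum_i\left(f(t_{i+1},X_{t_i,\cdot})-f(t_i,X_{t_i})\right)+\sum_i\left(f(t_{i+1},X_{t_{i+1}})-f(t_{i+1},X_{t_i,\cdot})\right),
\]
where $X_{t_i,\cdot}$ denotes the flat horizontal extension of the path stopped at $t_i$. The first sum consists of pure horizontal increments, so by the definition of the horizontal derivative and continuity of $\mathcal Df$ it converges to $\int_0^t\mathcal Df(X_s)\,\mathrm ds$. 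For the second sum I would perform a second-order vertical Taylor expansion on each subinterval carrying no large jump, writing the increment as $\nabla_\omega f(X_{t_i})\,\Delta x_i+\tfrac12\nabla_\omega^2 f(X_{t_i})(\Delta x_i)^2+R_i$ with $\Delta x_i=x_{t_{i+1}}-x_{t_i}$.

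On the continuous subintervals the first-order sums converge, as non-anticipative Riemann sums, to $\int_0^t\nabla_\omega f(X_s)\,\mathrm dx_s$ (the drift-and-Brownian integral), while the second-order sums converge, via the weak convergence of the discrete measures $\xi^n$ to $[x]^{c}$ recalled at the start of this section, to $\tfrac12\int_0^t\nabla_\omega^2 f(X_s)\,\mathrm d[x]_s^{c}$; the remainders $\sum_i R_i$ vanish by uniform continuity of $\nabla_\omega^2 f$ on $K$ together with finiteness of the quadratic variation. Each subinterval carrying a jump contributes the exact increment $f(z,X_{s-}+\gamma)-f(z,X_{s-})$, so in the limit the jumps produce $\int_0^t\int_{\R}\{f(z,X_{s-}+\gamma)-f(z,X_{s-})\}\,N(\mathrm ds,\mathrm dz)$. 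Decomposing $N=\widetilde N+\nu(\mathrm dz)\,\mathrm ds$ splits this into the martingale integral $\int_0^t\int_{\R}\{f(z,X_{s-}+\gamma)-f(z,X_{s-})\}\,\widetilde N(\mathrm ds,\mathrm dz)$ and a $\nu(\mathrm dz)\,\mathrm ds$ part; accounting for the continuous compensator $-\int\gamma\,\nu(\mathrm dz)\,\mathrm ds$ of the jump martingale, which is excluded from $\mathrm dx_s$, reorganises the latter into $\int_0^t\int_{\R}\{f(z,X_{s-}+\gamma)-f(z,X_{s-})-\gamma\nabla_\omega f(z,X_{s-})\}\,\nu(\mathrm dz)\,\mathrm ds$, giving the stated correction $-\gamma\nabla_\omega f$.

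The main obstacle will be the jump analysis: justifying the interchange of the truncation limit $\eps\downarrow0$ with the partition limit, and proving that the small-jump increments, once compensated, converge in $L^2$ to the displayed $\widetilde N$- and $\nu(\mathrm dz)\,\mathrm ds$-integrals without leaving residual cross terms with the Brownian part or spurious contributions to $[x]^{c}$. This rests on the quadratic integrability of $\gamma$ guaranteed by the standing growth condition, the isometry for the compensated measure $\widetilde N$, and the uniform continuity of $\nabla_\omega f$ on compacts; by contrast, the continuous-part computation is the now-standard F\"ollmer/Cont--Fourni\'e argument and should require only routine estimates.
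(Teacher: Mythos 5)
The paper itself offers no proof of Theorem \ref{mart1}: it is stated as the functional It{\^o} formula for c{\'a}dl{\'a}g semimartingales with jumps, imported from the literature the paper cites (Dupire \cite{dupire2009functional}, Cont--Fourni{\'e} \cite{cont2013functional}, Blacque-Florentin--Cont \cite{Blan2020}). So there is no internal argument to compare yours against; the relevant benchmark is the proof in those sources, and your proposal follows exactly that route --- localisation, F{\"o}llmer's partition telescoping into horizontal and vertical moves, convergence of the quadratic Riemann sums to $\frac{1}{2}\int_0^t\nabla_\omega^2f(X_s)\,\mathrm{d}[x]_s^c$, exact accounting of the finitely many large jumps, and $L^2$-compensation of the small ones. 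You also resolved the one real ambiguity in the statement the right way: reading $\int_0^t\nabla_\omega f(X_s)\,\mathrm{d}x_s$ as the integral against $b(X_s)\,\mathrm{d}s+\sigma(X_s)\,\mathrm{d}w_s$ is the only reading under which the displayed identity is correct and consistent with its use in the proof of Theorem \ref{mart}; if $\mathrm{d}x_s$ included the compensated jump part, the contribution $\int_0^t\int_{\mathbb R}\gamma\nabla_\omega f\,\widetilde{N}(\mathrm{d}s,\mathrm{d}z)$ would be counted twice, once inside $\int_0^t\nabla_\omega f(X_s)\,\mathrm{d}x_s$ and once inside $\int_0^t\int_{\mathbb R}\{f(z,X_{s-}+\gamma)-f(z,X_{s-})\}\widetilde{N}(\mathrm{d}s,\mathrm{d}z)$.

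There is, however, one step that fails as literally written, and it is precisely the step whose repair is the technical core of the Cont--Fourni{\'e} proof. You Taylor-expand the increment $f(t_{i+1},X_{t_{i+1}})-f(t_{i+1},X_{t_i,\cdot})$ to second order in the vertical direction. But this increment is not a vertical perturbation: the stopped path $X_{t_{i+1}}$ and the flat extension $X_{t_i,\cdot}$ differ on the whole interval $(t_i,t_{i+1}]$, not merely at the evaluation time $t_{i+1}$, whereas $\nabla_\omega f$ and $\nabla_\omega^2 f$ by definition only control bumps of the terminal value of a stopped path. (For a genuinely path-dependent functional such as a running integral, $\nabla_\omega f$ vanishes identically, yet these increments do not.) The standard fix is to introduce a piecewise-constant c{\'a}dl{\'a}g approximation $\omega^n$ of the path, adapted to the partition and with the large jump times inserted as partition points; for $\omega^n$ every increment is exactly a horizontal extension followed by a vertical shift, so the telescoping and the Taylor expansion are legitimate and yield the formula for $\omega^n$. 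One then lets $n\to\infty$, and this is where the regularity class earns its keep: $f$, $\mathcal{D}f$, $\nabla_\omega f$, $\nabla_\omega^2 f$ must be left-continuous with respect to the metric on stopped paths and boundedness-preserving, so that evaluating them along $\omega^n$ rather than along the true path perturbs each Riemann sum uniformly little. The ``uniform continuity on $K$'' your localisation invokes must be continuity in this path-space sense, not continuity along the single realised trajectory, and it is exactly this approximation-and-limit step that your sketch omits. By comparison, the obstacle you flag as the main one --- interchanging the truncation limit with the partition limit in the jump part --- is real but routine, handled in \cite{Blan2020} by the isometry for $\widetilde{N}$ and the square-integrability of $\gamma$.
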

The following result is the Feynmann-Kac formula for functionals.
\begin{thm}\label{mart}
Let $x$ follow
\begin{equation}
\mathrm{d}x_t=b(X_t)\,\mathrm{d}t+\sigma(X_t)\,\mathrm{d}w_t+\int_{\mathbb R}\gamma(z,X_{s-})\widetilde{N}(\mathrm{d}t,\mathrm{d}z).
\end{equation}
For suitably integrable $g:\Omega\rightarrow\mathbb R$, $0\leq t\leq{T}$ and $r:\Omega\rightarrow\mathbb R$, we define $f:\Omega\rightarrow\mathbb R$ by
\begin{equation}
f(Y_t)\equiv\mathbb E\left[e^{-\int_t^Tr(Z_u)\,\mathrm{d}u}g(Z_T)\mid Z_t=Y_t\right],
\end{equation}
where $Z_T(u)=Y_t(u)$ for $u\in[0,t]$ and
\begin{equation}
\mathrm{d}Z_u=b(Z_u)\,\mathrm{d}u+\sigma(Z_u)\,\mathrm{d}w_u+\int_{\mathbb R}\gamma(z,Z_{u-})\widetilde{N}(\mathrm{d}u,\mathrm{d}z)
\end{equation}
for $u\in[t,T]$. Then, if $f$ is smooth, it satisfies
\begin{eqnarray*}
&&{\mathcal D}f(X_t)+b(X_t)\nabla_{\omega}f(X_t)-r(X_t)f(X_t)+\frac{1}{2}\sigma^2(X_t)\nabla_{\omega}^2f(X_t)\\
&&+\int_{\mathbb R}\left(f(z,X_t+\gamma)-f(z,X_t)-\gamma\nabla_{\omega}f(z,X_t)\right)\nu(\mathrm{d}z)=0.
\end{eqnarray*}
\end{thm}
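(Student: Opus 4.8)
The plan is to exploit the martingale property hidden in the conditional-expectation definition of $f$ and then to read off the drift using the functional It\^o formula of Theorem~\ref{mart1}.

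First I would fix $t$ and the initial stopped path $Y_t$, and introduce the discount factor $R_s=e^{-\int_t^s r(Z_u)\,\mathrm{d}u}$ for $s\in[t,T]$ together with the process $V_s:=R_s\,f(Z_s)$. By the functional Markov property of the equation driving $Z$---the conditional law of the future path given $\mathcal F_s$ depends only on the stopped path $Z_s$, since $b,\sigma,\gamma$ are non-anticipative functionals---the tower property yields
\[
V_s=\mathbb E\!\left[e^{-\int_t^T r(Z_u)\,\mathrm{d}u}g(Z_T)\mid\mathcal F_s\right],
\]
so that $V$ is a martingale on $[t,T]$, being the conditional expectation of a fixed integrable random variable along the filtration $(\mathcal F_s)$.

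Next I would compute the semimartingale decomposition of $V$. Since $R$ is continuous and of finite variation, integration by parts produces no covariation term and $\mathrm{d}V_s=-r(Z_s)R_s f(Z_s)\,\mathrm{d}s+R_s\,\mathrm{d}f(Z_s)$. Applying Theorem~\ref{mart1} to $f(Z_s)$, substituting $\mathrm{d}Z_s=b(Z_s)\,\mathrm{d}s+\sigma(Z_s)\,\mathrm{d}w_s+\int_{\mathbb R}\gamma(z,Z_{s-})\widetilde N(\mathrm{d}s,\mathrm{d}z)$ and $\mathrm{d}[Z]_s^c=\sigma^2(Z_s)\,\mathrm{d}s$, and collecting all $\mathrm{d}s$ terms, I find that the finite-variation part of $V$ equals
\begin{align*}
R_s\Big[&\mathcal D f(Z_s)+b(Z_s)\nabla_\omega f(Z_s)-r(Z_s)f(Z_s)+\tfrac12\sigma^2(Z_s)\nabla_\omega^2 f(Z_s)\\
&+\int_{\mathbb R}\big(f(z,Z_s+\gamma)-f(z,Z_s)-\gamma\nabla_\omega f(z,Z_s)\big)\nu(\mathrm{d}z)\Big]\,\mathrm{d}s,
\end{align*}
whereas the surviving $\mathrm{d}w_s$ integral and the compensated $\widetilde N(\mathrm{d}s,\mathrm{d}z)$ integral constitute the (local) martingale part. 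Note that the jump contribution of $\int\nabla_\omega f\,\mathrm{d}Z$ enters only the martingale part, so it does not affect the drift.

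Finally I would invoke uniqueness of the semimartingale decomposition: a continuous finite-variation martingale started at $0$ is identically zero, so the drift integrand above vanishes $\mathrm{d}s\times\mathrm{d}P$-almost everywhere. As $R_s>0$, the bracketed functional operator applied to $f$ is zero, and evaluating at $s=t$, where $Z_t=Y_t=X_t$, delivers the stated PDE. The main obstacle I anticipate lies not in this formal computation but in the analysis compressed into the phrase ``if $f$ is smooth'': one must verify that $f$ is horizontally and twice vertically differentiable with derivatives in the boundedness-preserving class $\mathbb{B}(\Omega)$ so that Theorem~\ref{mart1} applies, and that the $\mathrm{d}w$ and $\widetilde N$ integrals are genuine martingales rather than merely local ones, which demands integrability of $\sigma\nabla_\omega f$ and of the jump increments $f(z,Z_{s-}+\gamma)-f(z,Z_{s-})$. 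Making the functional Markov property precise in this path-dependent setting, so that the tower-property step is fully rigorous, is the second delicate point.
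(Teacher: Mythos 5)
Your proposal is correct and takes essentially the same approach as the paper: the paper likewise forms the discounted price process $h(Y_t)=e^{-\int_0^t r(X_u)\,\mathrm{d}u}f(Y_t)$, observes that it is a martingale by its conditional-expectation definition, expands it with the functional It\^o formula of Theorem~\ref{mart1}, and concludes that the drift must vanish. Your version differs only cosmetically (discounting from $t$ rather than $0$) and in making explicit the tower-property, integration-by-parts, and semimartingale-uniqueness steps that the paper leaves implicit.
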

\begin{proof}
Set \begin{equation}\label{apple}h(Y_t)\equiv\mathbb E\left[e^{-\int_0^Tr(Z_u)du}g(Z_T)\mid Z_t=Y_t\right]=e^{-\int_0^tr(X_u)du}f(Y_t).\end{equation}
An application of the Functional It$\hat{\text{o}}$ formula yields
\begin{eqnarray*}
\mathrm{d}h(Y_t)&=&e^{-\int_0^tr(X_u)\,\mathrm{d}u}\left\{\left(\nabla_{\omega}f(Y_t)\,\mathrm{d}x_t+({\mathcal D}f(Y_t)-r(Y_t)f(Y_t))\,\mathrm{d}t+\frac{1}{2}\nabla_{\omega}^2f(Y_t)\,\mathrm{d}[x]_t^c\right.\right.\\
&&\left.\left.+\int_{\mathbb R}(f(z,Y_{t-}+\gamma)-f(z,Y_{t-})-\gamma\nabla_{\omega}f(z,Y_{t-}))\nu(\mathrm{d}z)\right)\,\mathrm{d}t\right.\\
&&+\left.\int_{\mathbb R}\left(f(z,Y_{t-}+\gamma)-f(z,Y_{t-})\right)\widetilde{N}(\mathrm{d}t,\mathrm{d}z)\right\}\\
&=&e^{-\int_0^tr(X_u)\,\mathrm{d}u}\left\{\left({\mathcal D}f(Y_t)+b(Y_t)\nabla_{\omega}f(Y_t)-r(Y_t)f(Y_t)+
\frac{1}{2}\sigma^2(Y_t)\nabla_{\omega}^2f(Y_t)\right.\right.\\
&&\left.\left.+\int_{\mathbb R}(f(z,Y_{t-}+\gamma)-f(z,Y_{t-})-\gamma\nabla_{\omega}f(z,Y_{t-}))\nu(\mathrm{d}z)\right)\,\mathrm{d}t\right.\\
&&\left.+\sigma(Y_t)\nabla_{\omega}f(Y_t)\,\mathrm{d}w_t +\int_{\mathbb R}\left(f(z,Y_t+\gamma)-f(z,Y_{t-})\right)\widetilde{N}(\mathrm{d}t,\mathrm{d}z)\right\}.
\end{eqnarray*}
Since $h$ is a martingale from (\ref{apple}), the drift term is zero, and hence the result.
\end{proof}
\begin{thm}
Assume that the assumptions in Theorem \ref{mart} holds, with $r=0$ and $f(Y_t)\equiv\mathbb E[g(Z_T)\mid Z_t=Y_t]$. Then
\begin{eqnarray*}
 g(X_T)&=&\mathbb E[g(X_T)\mid X_0]+\int_0^T\sigma(X_t)\nabla_{\omega}f(X_t)\,\mathrm{d}w_t\\
 &&+\int_0^T\int_{\mathbb R}\left(f(z,X_t+\gamma)-f(z,X_{t-})\right)\widetilde{N}(\mathrm{d}t,\mathrm{d}z).
 \end{eqnarray*}
\end{thm}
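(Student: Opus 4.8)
The plan is to read this statement as a martingale representation formula that drops out of applying the functional It\^o formula to the conditional-expectation functional $f$ and then eliminating the drift by means of the preceding Feynman--Kac result. First I would observe that, with $r=0$, the functional evaluated along the driving path, $f(X_t)=\mathbb E[g(X_T)\mid\mathcal F_t]$, is an $\mathcal F_t$-martingale; this follows from the flow property of the process $Z$ started from the stopped path $X_t$ together with the tower property of conditional expectation. I would also record at the outset the two boundary identifications that drive the whole argument: at $t=T$ the conditioning path already determines the terminal value, so $f(X_T)=g(X_T)$, while at $t=0$ one simply has $f(X_0)=\mathbb E[g(X_T)\mid X_0]$.

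Next I would apply the functional It\^o formula of Theorem~\ref{mart1} to $f(X_t)$ on $[0,T]$. Substituting the dynamics $\mathrm dx_s=b(X_s)\,\mathrm ds+\sigma(X_s)\,\mathrm dw_s+\int_{\mathbb R}\gamma\,\widetilde N(\mathrm ds,\mathrm dz)$ splits the term $\int_0^T\nabla_\omega f\,\mathrm dx_s$ into a drift piece $b\nabla_\omega f$, a Brownian martingale piece $\sigma\nabla_\omega f\,\mathrm dw_s$, and the compensated jump contribution. Collecting every $\mathrm ds$-term then produces exactly the integrand
\[
\mathcal Df+b\nabla_\omega f+\tfrac12\sigma^2\nabla_\omega^2 f+\int_{\mathbb R}\big(f(z,X_s+\gamma)-f(z,X_s)-\gamma\nabla_\omega f(z,X_s)\big)\nu(\mathrm dz),
\]
which is precisely the left-hand side of the Feynman--Kac equation of Theorem~\ref{mart} specialized to $r=0$. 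By that theorem this integrand vanishes identically, so the entire drift integral is zero and only the Brownian integral $\int_0^T\sigma\nabla_\omega f\,\mathrm dw_t$ and the compensated jump integral $\int_0^T\!\!\int_{\mathbb R}\big(f(z,X_t+\gamma)-f(z,X_{t-})\big)\widetilde N(\mathrm dt,\mathrm dz)$ survive.

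The remaining step is to insert the boundary values from the first paragraph: replacing $f(X_T)$ by $g(X_T)$ on the left and $f(X_0)$ by $\mathbb E[g(X_T)\mid X_0]$ on the right yields the claimed identity verbatim. I expect the main obstacle to be not the algebra, which is a direct specialization of the two preceding theorems, but the regularity and integrability bookkeeping: one must verify that $f$ lies in the class of functionals to which Theorem~\ref{mart1} applies (horizontal and vertical differentiability, with the relevant derivatives boundedness-preserving), justify the martingale identification $f(X_t)=\mathbb E[g(X_T)\mid\mathcal F_t]$ and the terminal condition $f(X_T)=g(X_T)$ rigorously, and confirm that the integrands $\sigma\nabla_\omega f$ and $f(z,X_t+\gamma)-f(z,X_{t-})$ satisfy the square-integrability conditions under the Brownian and (compensated) Poisson measures so that the two stochastic integrals are genuine mean-zero martingales rather than mere local martingales. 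These are exactly the points at which the smoothness hypothesis and the coefficient assumptions inherited from Theorem~\ref{mart} must be invoked with care.
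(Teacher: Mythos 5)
Your proposal is correct and takes essentially the same route as the paper: the paper's entire proof is the single sentence ``The result follows by applying Theorems \ref{mart1} and \ref{mart}'', and your argument---apply the functional It\^o formula to $f(X_t)$, use the $r=0$ Feynman--Kac equation to annihilate all the $\mathrm{d}s$-terms, and substitute the boundary identifications $f(X_T)=g(X_T)$ and $f(X_0)=\mathbb{E}[g(X_T)\mid X_0]$---is precisely the expansion of that sentence. Your closing paragraph on regularity, the martingale property of $f(X_t)$, and the square-integrability of the integrands addresses bookkeeping the paper itself leaves implicit.
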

\begin{proof}
The result follows by applying Theorems \ref{mart1} and \ref{mart}.
\end{proof}

For the definitions of spaces that we need for the latter sections, we use the notation in \cite{Blan2020}. We equip the space $(\Omega,\mathcal F)$ with a probability $\mathbb P$ such that $\omega$ defines a continuous martingale and $J$ a jump-measure such that $\int_0^t\int_{\mathbb R_0^d}\mid z\mid^2J(\mathrm{d}s,\mathrm{d}z)<\infty$ a.s.. with compensator $\mu$ absolutely continuous in time. The filtration $\mathbb F$ is completed by the $\mathbb P$-null sets. \\
Let $\mathcal L_{\mathbb P}^2([\omega],\mu)$ be the Hilbert space of predictable processes $\phi:[0,T]\times\Omega^p\rightarrow\mathbb R^d$ and $psi:[0,T]\times\mathbb R^d\times\Omega^p\rightarrow\mathbb R^d$ such that
\[\parallel(\phi,\psi) \parallel_{\mathcal L_P^2([\omega],\mu)}^2:=\mathbb E\left[\int_0^t\phi^2(s)\,\mathrm{d}[\omega](s)+\int_0^t\int_{\mathbb R_0^d}\psi^2(s,z)\mu(\mathrm{d}z)\,\mathrm{d}s\right]<\infty\]
and $\mathcal I_{\mathbb P}^2([\omega],\mu)$ be the space os square integrable stochastic integrals
\[\mathcal I_{\mathbb P}^2([\omega],\mu)=\left\{Y=\int_0^{\cdot}\phi(s,\omega)\,\mathrm{d}\omega+\int_0^{\cdot}\int_{\mathbb R_0^d}\psi(s,z)\widetilde{N}(\mathrm{d}s,\mathrm{d}z)\big|(\phi,\psi)\in\mathcal L_{\mathbb P}^2([\omega],\nu)\right\},\]
equipped with the norm
\[ \parallel Y\parallel_{\mathcal I_{\mathbb P}^2(\omega,\mu)}=\mathbb E[\mid Y(T)\mid^2].\]
We note that
\[\mathcal L_{\mathbb P}^2([\omega],\mu)=\mathcal L_{\mathbb P}^2([\omega])\oplus\mathcal L_{\mathbb P}^2(\mu) ~~~\text{and}~~
\mathcal I_{\mathbb P}^2([\omega],\mu)=\mathcal I_{\mathbb P}^2([\omega])\oplus\mathcal I_{\mathbb P}^2(\mu)  \]
with
\[ \mathcal L_{\mathbb P}^2([\omega]):=\left\{\phi:[0,T]\times\Omega^p\rightarrow\mathbb R~~\text{predictable}|~~ \| \phi \|_{\mathcal L_{\mathbb P}^2(\omega)}:=\mathbb E\left[\int_0^T\phi^2(t)d[\omega](t)\right]<\infty,\right\} \]
and
\[\mathcal I_{\mathbb P}^2([\omega]):=\left\{Y:[0,T]\times\Omega^p\rightarrow\mathbb R| Y(t)=\int_0^t\phi^2(z)\,\mathrm{d}\omega(t)\right\},\]
equipped with the norm
\[\parallel Y\parallel_{\mathcal I_{\mathbb P}^2([\omega])}:=\mathbb E[\mid Y\mid_T^2].\]

The stochastic integral operator defined as
\begin{align*}
{I}_{\omega,\widetilde{N}}:\mathcal{L}_{\mathbb P}^2([\omega],\mu)&\to\mathcal{I}_{\mathbb P}^2([\omega],\mu)\\
(\phi,\psi)&\mapsto\int_0^\cdot\phi\mathrm{d}\omega(s)+\int_0^{\cdot}\int_{\mathbb R_0^d}\psi(s,z)\widetilde{N}(\mathrm{d}s,\mathrm{d}z).
\end{align*}
The operators $\nabla_{\omega},\nabla_J: \mapsto\mathcal L_{\mathbb P}^2([\omega],\mu)$ admit suitable extensions to $\mathcal I_{\mathbb P}^2([\omega],\mu)$ which verifies
\[\forall\phi, \psi\in\mathcal L_{\mathbb P}^2([\omega],\mu)~~~\nabla\omega\left(\int_0^{\cdot}\phi \,\mathrm{d}\omega\right)=\phi~~~\text{and}~~~
\nabla_J\left(\int_0^{\cdot}\int_{\mathbb R_0^d}\psi \widetilde{N}(\mathrm{d}s,\mathrm{d}z)\right)=\psi,\]
that is, $\nabla_{\omega}$ and $\nabla_J$ are inverses of stochastic integrals.
\begin{thm}
The operators $\nabla_{\omega},\nabla_J:I_{\omega,\widetilde{N}}(S)\to\mathcal{L}_{\mathbb P}^2([\omega],\mu)$ admit a closure in $\mathcal{I}_{\mathbb P}^2([\omega],\mu)$. Its closure is a bijective isometry
\begin{align*}
\nabla_{\omega},\nabla_J:\mathcal{I}_{\mathbb P}^2([\omega],\mu)&\to\mathcal{L}_{\mathbb P}^2([\omega],\mu)\\
F(t,\omega_t,j_t): \int_0^\cdot\phi\mathrm{d}\omega(s)+\int_0^{\cdot}\int_{\mathbb R_0^d}\psi\widetilde{N}(\mathrm{d}s,\mathrm{d}z)&\mapsto(\nabla_{\omega}F,\nabla_JF)=(\phi,\psi).
\end{align*}
In particular, $\nabla_{\omega},\nabla_J$ are the adjoint of the  stochastic integrals
\begin{align*}
I_{\omega,\widetilde{N}}:\mathcal{L}_{\mathbb P}^2([\omega],\mu)&\to\mathcal{I}_{\mathbb P}^2([\omega,\mu),\\
(\phi,\psi)&\mapsto\int_0^\cdot\phi\mathrm{d}\omega(s)+\int_0^{\cdot}\int_{\mathbb R_0^d}\psi\widetilde{N}(\mathrm{d}s,\mathrm{d}z)
\end{align*}
in the following sense that for all $\phi,\psi\in\mathcal{L}_{\mathbb P}^2([\omega],\mu)$ and for all $Y\in\mathcal{I}_{\mathbb P}^2([\omega],\mu)$
\begin{eqnarray}\label{eq24}
\langle Y,I_{\omega,\widetilde{N}}(\phi,\psi)\rangle_{\mathcal I_{\mathbb P}^2([\omega],\mu)}&=&\mathbb{E}\left[Y(T)\left(\int_0^T\phi(s)\mathrm{d}\omega(s)+\int_0^T\int_{\mathbb R_0^d}\psi(s,z)\widetilde{N}(\mathrm{d}s,\mathrm{d}z)\right)\right]\nonumber\\
&=&\mathbb{E}\left[\int_0^T\nabla_{\omega}Y(s)\phi(s)\mathrm{d}[\omega](s)+\int_0^T\int_{\mathbb R_0^d}\nabla_JY(s,z)\psi(s,z)\mu(\mathrm{d}s,\mathrm{d}z)\right]\nonumber\\
&=:&\langle\nabla_{\omega,J}Y,(\phi,\psi)\rangle_{\mathcal L_{\mathbb P}^2([\omega],\mu)}.
\end{eqnarray}
\end{thm}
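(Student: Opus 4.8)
The plan is to recognise the statement as a unitarity result for the stochastic integral operator $I_{\omega,\widetilde{N}}$, with the pair of functional derivatives $(\nabla_\omega,\nabla_J)$ playing the role of its inverse, and then to read off the bijective isometry and the adjoint relation (\ref{eq24}) from the It{\^o} isometry. The single engine behind everything is the It{\^o} isometry for the two integrators, together with the orthogonality of the continuous part $\int_0^\cdot\phi\,\mathrm{d}\omega$ and the purely discontinuous part $\int_0^\cdot\int_{\mathbb{R}_0^d}\psi\,\widetilde{N}(\mathrm{d}s,\mathrm{d}z)$; this orthogonality is precisely what underlies the direct-sum decompositions $\mathcal{L}_{\mathbb{P}}^2([\omega],\mu)=\mathcal{L}_{\mathbb{P}}^2([\omega])\oplus\mathcal{L}_{\mathbb{P}}^2(\mu)$ and $\mathcal{I}_{\mathbb{P}}^2([\omega],\mu)=\mathcal{I}_{\mathbb{P}}^2([\omega])\oplus\mathcal{I}_{\mathbb{P}}^2(\mu)$ recalled above.

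First I would check that $I_{\omega,\widetilde{N}}$ is an isometry. For $(\phi,\psi)\in\mathcal{L}_{\mathbb{P}}^2([\omega],\mu)$, set $Y=I_{\omega,\widetilde{N}}(\phi,\psi)$ and expand $\|Y\|^2_{\mathcal{I}_{\mathbb{P}}^2([\omega],\mu)}=\mathbb{E}[|Y(T)|^2]$ by squaring the sum of the two integrals. The two mixed products have zero expectation because the continuous and compensated-jump integrals are orthogonal square-integrable martingales, whereas the diagonal terms equal $\mathbb{E}\int_0^T\phi^2(s)\,\mathrm{d}[\omega](s)$ and $\mathbb{E}\int_0^T\int_{\mathbb{R}_0^d}\psi^2(s,z)\,\mu(\mathrm{d}z)\,\mathrm{d}s$ by the It{\^o} isometry for each integrator. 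Their sum is exactly $\|(\phi,\psi)\|^2_{\mathcal{L}_{\mathbb{P}}^2([\omega],\mu)}$, so $I_{\omega,\widetilde{N}}$ is isometric and hence injective.

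Next I would assemble the bijection and the closure. Surjectivity onto $\mathcal{I}_{\mathbb{P}}^2([\omega],\mu)$ is built into the definition of that space as the set of such stochastic integrals, so $I_{\omega,\widetilde{N}}$ is a bijective isometry of Hilbert spaces. The derivatives $\nabla_\omega,\nabla_J$ are first defined on $I_{\omega,\widetilde{N}}(S)$, for a dense class $S\subset\mathcal{L}_{\mathbb{P}}^2([\omega],\mu)$, by the inversion identities $\nabla_\omega(\int_0^\cdot\phi\,\mathrm{d}\omega)=\phi$ and $\nabla_J(\int_0^\cdot\int_{\mathbb{R}_0^d}\psi\,\widetilde{N}(\mathrm{d}s,\mathrm{d}z))=\psi$ recalled before the theorem; there they coincide with $I_{\omega,\widetilde{N}}^{-1}$. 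Because $I_{\omega,\widetilde{N}}$ is isometric, this inverse is isometric on its domain, so it is automatically closable: if $Y_n\to0$ in $\mathcal{I}_{\mathbb{P}}^2([\omega],\mu)$ then $(\nabla_\omega Y_n,\nabla_J Y_n)\to0$ in $\mathcal{L}_{\mathbb{P}}^2([\omega],\mu)$. The continuous extension of $(\nabla_\omega,\nabla_J)$ to all of $\mathcal{I}_{\mathbb{P}}^2([\omega],\mu)$ then equals $I_{\omega,\widetilde{N}}^{-1}$ and is the asserted bijective isometry.

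Finally, the adjoint relation (\ref{eq24}) is the polarised isometry. Writing $Y=I_{\omega,\widetilde{N}}(\nabla_\omega Y,\nabla_J Y)$ for $Y\in\mathcal{I}_{\mathbb{P}}^2([\omega],\mu)$ and expanding $\langle Y,I_{\omega,\widetilde{N}}(\phi,\psi)\rangle_{\mathcal{I}_{\mathbb{P}}^2([\omega],\mu)}=\mathbb{E}[Y(T)\,I_{\omega,\widetilde{N}}(\phi,\psi)(T)]$ by bilinearity, the cross terms vanish by the same orthogonality and the diagonal terms reproduce $\mathbb{E}\int_0^T\nabla_\omega Y\,\phi\,\mathrm{d}[\omega]+\mathbb{E}\int_0^T\int_{\mathbb{R}_0^d}\nabla_J Y\,\psi\,\mu(\mathrm{d}s,\mathrm{d}z)$, which is $\langle\nabla_{\omega,J}Y,(\phi,\psi)\rangle_{\mathcal{L}_{\mathbb{P}}^2([\omega],\mu)}$; equivalently, a bijective isometry is unitary, so its inverse is its adjoint. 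The step needing the most care is the isometry computation and the vanishing of the continuous--jump cross terms, since that orthogonality (with the integrability hypotheses on $\psi$ and $\mu$) is what licenses splitting both the norm and the inner product; once it is secured, closability, bijectivity and the adjoint identity are formal consequences of $I_{\omega,\widetilde{N}}$ being a surjective isometry.
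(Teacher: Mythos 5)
The paper never proves this theorem: it is stated without proof, being quoted from Blacque-Florentin and Cont \cite{Blan2020}, and the text moves directly on to the section on Greeks. So there is no internal argument to compare yours against; what you have written is a reconstruction of the standard proof, and as a formal Hilbert-space argument it is correct. The It{\^o} isometries for $\int_0^\cdot\phi\,\mathrm{d}\omega$ and $\int_0^\cdot\int_{\mathbb R_0^d}\psi\,\widetilde N(\mathrm{d}s,\mathrm{d}z)$, plus the vanishing of the cross term (the covariation of a continuous and a purely discontinuous square-integrable martingale is zero), give that $I_{\omega,\widetilde N}$ is isometric; surjectivity is indeed built into the definition of $\mathcal I_{\mathbb P}^2([\omega],\mu)$; a densely defined isometry is closable and its closure here is $I_{\omega,\widetilde N}^{-1}$; and the adjoint relation \eqref{eq24} is polarization of the isometry. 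This matches the structure of the proof in the cited source.

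Two caveats should be made explicit, since they mark where your argument (like the paper's presentation) leans on unproved input. First, the substance of the theorem in \cite{Blan2020} is not the functional-analytic bookkeeping but the identification you take as given: that the \emph{pathwise} derivatives $\nabla_\omega,\nabla_J$ --- defined as limits of difference quotients of functionals, not as abstract inverses --- coincide with the integrands $(\phi,\psi)$ on the test class $I_{\omega,\widetilde N}(S)$. You import this from the paragraph preceding the theorem (as the paper does), but a self-contained proof must establish it, e.g.\ by applying the functional It{\^o} formula to a smooth functional and invoking uniqueness of the semimartingale decomposition; without that step, the ``closure of a pathwise operator'' in the statement is not the same object as the ``inverse of $I_{\omega,\widetilde N}$'' you construct. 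Second, your conclusion that the closure has domain all of $\mathcal I_{\mathbb P}^2([\omega],\mu)$ requires $I_{\omega,\widetilde N}(S)$ to be dense there, i.e.\ $S$ dense in $\mathcal L_{\mathbb P}^2([\omega],\mu)$; the paper never defines $S$, so you should state this density as a hypothesis rather than treat it as automatic. With those two points flagged, the proposal is sound.
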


\section{Greeks for path-dependent functionals}
In this section, we compute the Greeks for path-dependent options. We consider the following path-dependent volatility model
\begin{align}\label{eq11}
\mathrm{d}x_t=rx_t\mathrm{d}t+\sigma(X_t)\mathrm{d}w_t+\int_{\mathbb R}\gamma(z,X_{t-})\widetilde{N}(\mathrm{d}t,\mathrm{d}z).
\end{align}
The no-arbitrage price at time $t$ be given by
\begin{equation}
f(Y_t)=\exp(-r(T-t))\mathbb E[g(X_T)\mid Y_t],
\end{equation}
where $g:\Omega\to\mathbb{R}$ is a functional. We call $f(y)$ the price function.

It is well-known, see \cite{chan1999}, that, in analogy with the Black-Scholes partial differential equation, in the L\'evy market setting $f$ will satisfy a Partial Differential Integral Equation (PDIE). This is stated in the following result.
\begin{thm}[Pricing pde]
Let $x_t$ be the solution of (\ref{eq11}) and let $r$ be the instantaneous interest rate. Then, if the price of the path-dependent derivative $f$ belongs to $ C^{1,2}$, it satisfies
\begin{eqnarray}
&&{\mathcal D}f(X_t)+rx_t\nabla_{\omega}f(X_t)+\frac{1}{2}\sigma^2(X_t)\nabla_{\omega}^2f(X_t)\nonumber\\
&&+\int_{\mathbb R}\left\{f(z,X_{t-}+\gamma)-f(z,X_{t-})-\gamma\nabla_{\omega}f(z,X_{t-})\right\}\nu(\mathrm{d}z)- rf(X_{t})=0,
\end{eqnarray}
with $f(X_T)=g(X_T)$.
\end{thm}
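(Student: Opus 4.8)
The plan is to recognize the pricing equation as a specialization of the functional Feynman--Kac formula of Theorem~\ref{mart}: the dynamics (\ref{eq11}) are those of Theorem~\ref{mart} with drift coefficient $b(X_t)=rx_t$, and since $r$ is constant one has $\int_t^T r(Z_u)\,\mathrm{d}u=r(T-t)$, so the price $f(Y_t)=e^{-r(T-t)}\mathbb{E}[g(X_T)\mid Y_t]$ is exactly the functional appearing there with $r(X_t)\equiv r$. Substituting $b(X_t)=rx_t$ and $r(X_t)=r$ into the conclusion of Theorem~\ref{mart} reproduces the stated PDIE. Under the hypothesis $f\in C^{1,2}$ this one-line argument is already rigorous; for completeness I would also give the self-contained martingale derivation that mirrors the proof of Theorem~\ref{mart}.

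For the self-contained version I would first introduce the discounted price $h(Y_t):=e^{-rt}f(Y_t)=e^{-rT}\mathbb{E}[g(X_T)\mid Y_t]$, which is a martingale by the tower property of conditional expectation. I would then apply the functional It\^o formula of Theorem~\ref{mart1} to $h$ along the path $X$. Since the factor $e^{-rt}$ is deterministic and enters only through time, the product rule for the horizontal derivative gives $\mathcal{D}h=e^{-rt}(\mathcal{D}f-rf)$, while $\nabla_\omega h=e^{-rt}\nabla_\omega f$, $\nabla_\omega^2 h=e^{-rt}\nabla_\omega^2 f$, and every jump increment of $h$ is $e^{-rt}$ times the corresponding increment of $f$. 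Substituting the continuous part $rx_t\,\mathrm{d}t+\sigma(X_t)\,\mathrm{d}w_t$ of (\ref{eq11}) into the $\nabla_\omega h\,\mathrm{d}x_t$ term and collecting, $\mathrm{d}h$ splits into a finite-variation $\mathrm{d}t$ part, a Brownian martingale part $e^{-rt}\sigma(X_t)\nabla_\omega f(X_t)\,\mathrm{d}w_t$, and a compensated-jump martingale part driven by $\widetilde{N}(\mathrm{d}t,\mathrm{d}z)$.

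Because $h$ is a true martingale and the $\mathrm{d}w_t$ and $\widetilde{N}$ integrals are martingales, the finite-variation part of $\mathrm{d}h$ vanishes identically. Setting its $\mathrm{d}t$-coefficient to zero and cancelling the strictly positive factor $e^{-rt}$ yields
\begin{equation*}
\mathcal{D}f(X_t)+rx_t\nabla_\omega f(X_t)-rf(X_t)+\tfrac{1}{2}\sigma^2(X_t)\nabla_\omega^2 f(X_t)+\int_{\mathbb R}\bigl\{f(z,X_{t-}+\gamma)-f(z,X_{t-})-\gamma\nabla_\omega f(z,X_{t-})\bigr\}\nu(\mathrm{d}z)=0,
\end{equation*}
which is the claimed identity after reordering the terms. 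The terminal condition is immediate: at $t=T$ the discount factor $e^{-r(T-T)}=1$ and $X_T$ is $\mathcal{F}_T$-measurable, whence $f(Y_T)=\mathbb{E}[g(X_T)\mid Y_T]=g(X_T)$.

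The main obstacle is not the algebra but the justification that the finite-variation part may be equated to zero. This is the uniqueness of the decomposition of the special semimartingale $h$ into a predictable finite-variation part and a local martingale, together with the assertion that $h$ is a genuine martingale so that its finite-variation part vanishes; both require that the It\^o-expansion integrals against $w$ and $\widetilde{N}$ be true martingales, which is where the integrability and Lipschitz hypotheses on $b,\sigma,\gamma$ and the smoothness $f\in C^{1,2}$ (so that Theorem~\ref{mart1} applies and $[x]^c$ is well defined) enter.
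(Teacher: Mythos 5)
Your proposal is correct and takes essentially the same approach as the paper: the paper states this theorem without proof (appealing to \cite{chan1999}), but it is exactly the specialization of Theorem~\ref{mart} to $b(X_t)=rx_t$ and constant short rate $r(\cdot)\equiv r$, which is your one-line argument. Your self-contained derivation moreover reproduces the paper's own proof of Theorem~\ref{mart} almost verbatim --- discount to $h(Y_t)=e^{-rt}f(Y_t)$, apply the functional It{\^o} formula, and use the martingale property of $h$ to annihilate the drift --- so there is nothing genuinely different here, and no gap beyond the integrability caveats you already flag.
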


For the purpose of arithmetic simplicity, we assume that the instantaneous interest rate is zero, that  is, $r=0$. Equation (\ref{eq11}) reduces to
\begin{align}\label{eq73}
\mathrm{d}x_t=\sigma(X_t)\mathrm{d}w_t+\int_{\mathbb R}\gamma(z,X_{t-})\widetilde{N}(\mathrm{d}t,\mathrm{d}z).
\end{align}
The first variation process $\{z_t,~0\leq t\leq T \}$ associated to $\{x_t, ~0\leq t\leq T \}$ given in (\ref{eq73})
is defined by the stochastic differential equation
\begin{align}\label{eq25}
\mathrm{d}z_t=\nabla_{\omega}\sigma(X_t)z_t\mathrm{d}w_t+\int_{\mathbb R}\nabla_{\omega}\gamma(z,X_{t-})z_t\widetilde{N}(\mathrm{d}t,\mathrm{d}z), ~~Z_0=I.
\end{align}
The price of the derivative with maturity $T$ and contract $g:\Omega\rightarrow\mathbb R$ is, therefore, given by the functional $f:\Omega\rightarrow\mathbb R$
 \begin{align}\label{eq13}
f(Y_t)=\mathbb{E}[g(X_T)|Y_t],
\end{align}
for any $Y_t\in\Omega$.

 The pricing pde reduces to
\begin{eqnarray}\label{eq14}
&&{\mathcal D}f(X_t)+\frac{1}{2}\sigma^2(X_t)\nabla_{\omega}^2f(X_t)+\int_{\mathbb R}\left\{f(z,X_{t-}+\gamma)-f(z,X_{t-})-\gamma\nabla_{\omega}f(z,X_{t-})\right\}\nu(\mathrm{d}z)=0.\nonumber\\
\end{eqnarray}
We want to apply the functional It{\^o} formula to $\nabla_{\omega}f(X_t)z_t$. First, we apply $\nabla_\omega$ to both the pricing pde and
functional It{\^o} formula. Applying $\nabla_\omega$ to the pricing pde \eqref{eq14} yields
\begin{align}\label{eq_pde}
  \nabla_\omega(\mathcal{D}f)&(X_t)+\sigma(X_t)\nabla_\omega\sigma(X_t)\nabla_\omega^2f(X_t)+\frac{1}{2}\sigma^2(X_t)\nabla_\omega^3f(X_t)
  +\int_{\mathbb{R}}\left\{f(z,X_{t-}+\gamma)-f(z,X_{t-})\right\}\nu(\mathrm{d}z)\nonumber\\
  &+\int_{\mathbb{R}}\left\{f(z,X_{t-}+\gamma)-f(z,X_{t-})-\gamma\nabla_\omega f(z,X_{t-})\right\}\nu(\mathrm{d}z)=0.\nonumber\\
\end{align}
Applying $\nabla_\omega$ to the functional It{\^o} formula gives
\begin{align*}
  \nabla_\omega f(X_t)&=\nabla_\omega f(X_0)+\int_{0}^{t}\nabla_\omega(\mathcal{D}f)(X_s)\,\mathrm{d}s+\int_{0}^{t}\nabla_\omega^2f(X_s)\,\mathrm{d}x_s+
  \frac{1}{2}\int_{0}^{t}\nabla_\omega^3f(X_s)\,\mathrm{d}[x]_s^c\\
  &+\int_{0}^{t}\int_{\mathbb{R}}\left\{f(z,X_{s-}+\gamma)-f(z,X_{s-})-\gamma\nabla_\omega f(z,X_{s-})\right\}\nu(\mathrm{d}z)\,\mathrm{d}s\nonumber\\
  &+\int_0^t\int_{\mathbb R}\left\{f(z,X_{s-}+\gamma)-f(z,X_{s-})\right\}\widetilde{N}(\mathrm{d}s,\mathrm{d}z),
\end{align*}
which, in differential form, takes the following form
\begin{align}\label{eq74}
 \mathrm{d}(\nabla_\omega f(X_t))&=\nabla_\omega(\mathcal{D}f)(X_t)\,\mathrm{d}t+\nabla_\omega^2f(X_t)\,\mathrm{d}x_t+\frac{1}{2}\nabla_\omega^3f(X_t)\,\mathrm{d}[x]_t^c\nonumber\\
  &+\int_{\mathbb{R}}\left\{f(z,X_{t-}+\gamma)-f(z,X_{t-})-\gamma\nabla_\omega f(z,X_{t-})\right\}\nu(\mathrm{d}z)\,\mathrm{d}t\nonumber\\
  &+\int_{\mathbb{R}}\left\{f(z,X_{t-}+\gamma)-f(z,X_{t-})\right\}\widetilde{N}(\mathrm{d}t,\mathrm{d}z)
\end{align}
%
Substituting (\ref{eq73}) into (\ref{eq74}) gives
\begin{align*}
  \mathrm{d}(\nabla_\omega f(X_t)) &= \nabla_\omega(\mathcal{D}f)(X_t)\,\mathrm{d}t+\sigma(X_t)\nabla_\omega^2f(X_t)\,\mathrm{d}w_t
  +\frac{1}{2}\sigma^2(X_t)\nabla_\omega^3f(X_t)\,\mathrm{d}t\nonumber\\
  &+\int_{\mathbb{R}}\left\{f(z,X_{t-}+\gamma)-f(z,X_{t-})-\gamma\nabla_\omega f(z,X_{t-})\right\}\nu(\mathrm{d}z)\,\mathrm{d}t\nonumber\\
  &+\int_{\mathbb{R}}\left\{f(z,X_{t-}+\gamma)-f(z,X_{t-})\right\}\widetilde{N}(\mathrm{d}t,\mathrm{d}z).
\end{align*}
Now, applying the functional It{\^o} formula to $\nabla_{\omega}f(X_t)z_t$ yields
\begin{align}\label{eq101}
  \mathrm{d}(\nabla_\omega f(X_t)z_t)&= z_t\,\mathrm{d}(\nabla_\omega f(X_t))+\nabla_\omega f(X_t)\,\mathrm{d}z_t+\mathrm{d}(\nabla_\omega f(X_t))\,\mathrm{d}z_t\nonumber\\
  &~~= \nabla_\omega(\mathcal{D}f)(X_t)z_t\,\mathrm{d}t+\sigma(X_t)\nabla_\omega^2f(X_t)z_t\,\mathrm{d}w_t+\frac{1}{2}\sigma^2(X_t)\nabla_\omega^3f(X_t)z_t\,\mathrm{d}t\nonumber\\
  &~~+\int_{\mathbb{R}}\left\{f(z,X_{t-}+\gamma)-f(z,X_{t-})-\gamma\nabla_\omega f(z,X_{t-})\right\}z_t\nu(\mathrm{d}z)\,\mathrm{d}t\nonumber\\
  &~~+\int_{\mathbb{R}}\left\{f(z,X_{t-}+\gamma)-f(z,X_{t-})\right\}z_t\widetilde{N}(\mathrm{d}t,\mathrm{d}z)
  +\nabla_\omega\sigma(X_t)\nabla_\omega f(X_t)z_t\,\mathrm{d}w_t\nonumber\\
  &+\sigma(X_t)\nabla_\omega\sigma(X_t)\nabla_\omega^2f(X_t)z_t\,\mathrm{d}t+\int_{\mathbb{R}}\left\{f(z,X_{t-}+\gamma)-f(z,X_{t-})\right\}z_t\nu(\mathrm{d}z)\,\mathrm{d}t\nonumber\\
          & =\left(\nabla_\omega(\mathcal{D}f)(X_t)+\sigma(X_t)\nabla_\omega\sigma(X_t)\nabla_\omega^2f(X_t)+
          \frac{1}{2}\sigma^2(X_t)\nabla_\omega^3f(X_t)\right.\nonumber\\
          &+\left.\int_{\mathbb{R}}\left\{f(z,X_{t-}+\gamma)-f(z,X_{t-})\right\}\nu(\mathrm{d}z)\right.\nonumber\\
            &+\left.\int_{\mathbb{R}}\left\{f(z,X_{t-}+\gamma)-f(z,X_{t-})
            -\gamma\nabla_\omega f(z,X_{t-})\right\}\nu(\mathrm{d}z)\right)z_t\,\mathrm{d}t\nonumber\\
            &+\left(\sigma(X_t)\nabla_\omega^2f(X_t)+\nabla_\omega\sigma(X_t)\nabla_\omega f(X_t)\right)z_t\,\mathrm{d}w_t\nonumber\\
          &+\left.\left\{f(z,X_{t-}+\gamma)-f(z,X_{t-})\right\}\right)z_t\widetilde{N}(\mathrm{d}t,\mathrm{d}z)
\end{align}
We observe from \eqref{eq_pde} that the first term in \eqref{eq101} is zero and that the the sum of the second and the third terms is a local martingale.

We define a local martingale
\begin{align}\label{eq75}
  m_t&:= \int_{0}^{t}\left(\nabla_\omega\sigma(X_s)\nabla_\omega f(X_s)+\sigma(X_s)\nabla_\omega^2f(X_s)\right)z_s\,\mathrm{d}w_s\nonumber\\
  &~~+\int_0^t\int_{\mathbb{R}}\left\{f(z,X_{s-}+\gamma)-f(z,X_{s-})\right\}z_s\widetilde{N}(\mathrm{d}s,\mathrm{d}z)
\end{align}
with $m_0=0$.

Using Definition \ref{Lie21} and \eqref{eq_pde} we obtain
\begin{align}\label{eq22}
\mathrm{d}(\nabla_\omega f(X_t)z_t)=\left(\mathcal D(\nabla_{\omega}f(X_t))-\nabla_{\omega}(\mathcal Df(X_t))\right)z_t\,\mathrm{d}t=-\mathfrak{L}f(X_t)z_t\mathrm{d}t+\mathrm{d}m_t.
\end{align}
As in \cite{jazaerli2017functional}, we assume that the Lie bracket of $f$, $\mathfrak{L}f$ exists, and that for continuous paths $Y_t$,
 $\mathfrak{L}f(Y_t)=0.$ In particular, $f$ is locally weakly path-dependent.
\subsubsection{Delta}
We are now in a position to compute the price sensitivities. We first compute the Delta. The Delta is defined as the derivative of the option price with respect to the initial price. For a price function $f(Y_t)$ at time $t$, its Delta is given by $\nabla_{\omega}f(Y_t)$. The following theorem gives the Delta of a price.
\begin{thm}\label{delta}
Let $x_t$ be the solution of (\ref{eq73}) on the interval $[0,T]$ and $g:\Lambda_T\to\mathbb{R}$ denote some bounded function. Suppose the price function is given by (\ref{eq13}). Then $(\nabla_\omega f(X_t)z_t)_{t\in[0,t]}$ is a local martingale and the following formula for the Delta is valid
\begin{align}\label{eq21}
\nabla_\omega f(X_0)=\mathbb{E}\left[g(X_T)\frac{1}{T}\int_0^T\frac{z_t}{\sigma(X_t)}\,\mathrm{d}w_t|X_0\right].
\end{align}
\end{thm}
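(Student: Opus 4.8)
The plan is to recognise (\ref{eq21}) as a functional Bismut--Elworthy--Li identity and to prove it by an integration-by-parts argument resting on the martingale property already isolated in (\ref{eq22}). First I would record the martingale step: since $f$ is locally weakly path-dependent, $\mathfrak{L}f(X_t)=0$, so (\ref{eq22}) collapses to $\mathrm{d}(\nabla_\omega f(X_t)z_t)=\mathrm{d}m_t$ with $m_t$ the local martingale (\ref{eq75}). Under the integrability hypotheses ($g$ bounded, $\sigma$ non-degenerate, suitable moments of $z_t$) this is a true martingale, so that
\begin{align*}
\mathbb{E}\left[\nabla_\omega f(X_t)z_t\mid X_0\right]=\nabla_\omega f(X_0)z_0=\nabla_\omega f(X_0)\qquad\text{for every }t\in[0,T],
\end{align*}
using $z_0=I$ from (\ref{eq25}).

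Next I would introduce the auxiliary weight $N_t:=\int_0^t \tfrac{z_s}{\sigma(X_s)}\,\mathrm{d}w_s$, which is well defined because $\sigma$ is assumed non-degenerate. Recall from the martingale representation established earlier (the theorem following Theorem \ref{mart}) that $f(X_t)=\mathbb{E}[g(X_T)\mid X_t]$ is itself a martingale whose Brownian component is $\sigma(X_t)\nabla_\omega f(X_t)\,\mathrm{d}w_t$. Applying the product rule to $f(X_t)N_t$ gives
\begin{align*}
\mathrm{d}\bigl(f(X_t)N_t\bigr)=f(X_t)\,\mathrm{d}N_t+N_t\,\mathrm{d}f(X_t)+\mathrm{d}[f(X),N]_t.
\end{align*}
Since $N$ is continuous, the cross variation receives no contribution from the jump part of $f(X)$, so $\mathrm{d}[f(X),N]_t=\sigma(X_t)\nabla_\omega f(X_t)\cdot\tfrac{z_t}{\sigma(X_t)}\,\mathrm{d}t=\nabla_\omega f(X_t)z_t\,\mathrm{d}t$, while the other two terms are local martingale increments.

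Then I would take conditional expectation given $X_0$. Using $N_0=0$, the left-hand side becomes $\mathbb{E}[f(X_T)N_T\mid X_0]$, the two martingale terms vanish, and the drift contribution integrates, by the martingale step above and Fubini, to $\int_0^T \mathbb{E}[\nabla_\omega f(X_t)z_t\mid X_0]\,\mathrm{d}t=T\,\nabla_\omega f(X_0)$. Substituting the terminal condition $f(X_T)=g(X_T)$ and dividing by $T$ recovers (\ref{eq21}). The main obstacle I anticipate is the passage from local to true martingales needed to take these expectations legitimately --- both for $\nabla_\omega f(X_t)z_t$ and for the two local-martingale terms from the product rule. This demands control of $\mathbb{E}\bigl[\int_0^T (z_t/\sigma(X_t))^2\,\mathrm{d}t\bigr]$ together with the analogous jump-integral moment, which is precisely where the non-degeneracy and boundedness of $\sigma$, the growth and Lipschitz conditions (\textbf{A}.1)--(\textbf{A}.2) governing the moments of $z_t$, and the boundedness of $g$ all enter; the Fubini interchange of $\int_0^T$ with the conditional expectation follows from the same bounds.
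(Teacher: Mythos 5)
Your proposal is correct, and it shares its first half with the paper: both arguments start from the collapse of \eqref{eq22} to $\mathrm{d}(\nabla_\omega f(X_t)z_t)=\mathrm{d}m_t$ under $\mathfrak{L}f=0$, promote $m$ to a true martingale under integrability assumptions, and use $z_0=I$ to get $\mathbb{E}[\nabla_\omega f(X_t)z_t\mid X_0]=\nabla_\omega f(X_0)$, hence $\nabla_\omega f(X_0)=\mathbb{E}\bigl[\tfrac1T\int_0^T\nabla_\omega f(X_t)z_t\,\mathrm{d}t\mid X_0\bigr]$. Where you diverge is in how this time-averaged expression is converted into the weighted payoff expectation. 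The paper rewrites $\mathrm{d}t$ through $\mathrm{d}[x]_t^c=\sigma^2(X_t)\,\mathrm{d}t+\int_{\mathbb R}\gamma^2(z,X_t)\nu(\mathrm{d}z)\,\mathrm{d}t$, which forces a jump-correction term, and then invokes the abstract adjoint relation \eqref{eq24} between $(\nabla_\omega,\nabla_J)$ and the stochastic integral operator, finally substituting the dynamics of $x_t$ so that the $\gamma$-terms cancel and only $\int_0^T \frac{z_t}{\sigma(X_t)}\,\mathrm{d}w_t$ survives. You instead introduce the candidate weight $N_t=\int_0^t \frac{z_s}{\sigma(X_s)}\,\mathrm{d}w_s$ directly, use the martingale representation of $f(X_t)$ (the corollary to Theorems \ref{mart1} and \ref{mart}, whose Brownian component is $\sigma(X_t)\nabla_\omega f(X_t)\,\mathrm{d}w_t$), and extract the key identity from the covariation term $\mathrm{d}[f(X),N]_t=\nabla_\omega f(X_t)z_t\,\mathrm{d}t$ in the It{\^o} product rule. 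In effect you re-prove the special case $\psi=0$ of the duality \eqref{eq24} by hand rather than citing it. What each route buys: the paper's is shorter once \eqref{eq24} is available and displays the Malliavin-style adjoint structure explicitly; yours is self-contained and cleaner in the jump setting, since continuity of $N$ means the jump part of $f(X)$ never enters the bracket, so the $\gamma^2$ and $\widetilde N$ correction terms that the paper must carry and cancel simply never appear. Both routes rest on exactly the same unproved integrability assumptions (true-martingale property of $m$ and of the product-rule local martingales, plus Fubini), which you flag more carefully than the paper does.
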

\begin{proof}
Assume that $x$ and $m$ martingales. Since $x_t$ is a continuous path we conclude that
\begin{align}\label{eq23}
\nabla_\omega f(X_t)z_t=\nabla_\omega f(X_0)+m_t,
\end{align}
which clearly shows that $(\nabla_\omega f(X_t)z_t)_{t\in[0,T]}$ is a local martingale. Integrating \eqref{eq23} with respect to $t$, we obtain
\begin{align*}
\int_0^T\nabla_\omega f(X_t)z_t\,\mathrm{d}t=\nabla_\omega f(X_0)T+\int_0^Tm_t\,\mathrm{d}t.
\end{align*}
Taking the expectation on both sides we get
\begin{align*}
\mathbb E\left[\int_0^T\nabla_\omega f(X_t)z_t\,\mathrm{d}t\right]=\nabla_\omega f(X_0)T+\mathbb E\left[\int_0^Tm_t\,\mathrm{d}t\right].
\end{align*}
The dominated convergence theorem allow us to interchange the expectation and the integral in the last term, noticing that $\mathbb E[m_t]=m_0=0$, we have
\begin{align*}
\mathbb{E}\left[\int_0^T\nabla_\omega f(X_t)z_t\,\mathrm{d}t\right]=\nabla_\omega f(X_0)T,
\end{align*}
which is equivalent to
\begin{align*}
\nabla_\omega f(X_0)=\mathbb{E}\left[\int_0^T\nabla_\omega f(X_t)\frac{1}{T}z_t\,\mathrm{d}t\right].
\end{align*}
We note that from \eqref{eq73} that
\[ \mathrm{d}[x]_t^c=\sigma^2(X_t)\,\mathrm{d}t+\int_{\mathbb R}\gamma^2(z,X_t)\nu(\mathrm{d}z)\,\mathrm{d}t.\]
Therefore, we have
\begin{align*}
\nabla_\omega f(X_0)&=\mathbb{E}\left[\int_0^T\nabla_\omega f(X_t)\frac{1}{T}\frac{z_t}{\sigma^2(X_t)}\,
\mathrm{d}[x]_t^c-\int_0^T\int_{\mathbb R}\nabla_{\omega}f(X_t)\frac{1}{T}\frac{z_t}{\sigma^2(X_t)}\gamma^2(z,X_{t-})\nu(\mathrm{d}z)\,\mathrm{d}t]\right].
\end{align*}
An application of the integration by parts formula \eqref{eq24} we obtain
\begin{align*}
\nabla_\omega f(X_0)&=
\mathbb{E}\left[f(X_t)\frac{1}{T}\left(\int_0^T\frac{z_t}{\sigma^2(X_t)}\,\mathrm{d}x_t-\int_0^T\int_{\mathbb R}\frac{z_t}{\sigma^2(X_t)}\gamma(z,X_{t-})\widetilde{N}(\mathrm{d}t,\mathrm{d}z)\right)\right].
\end{align*}
Substituting for $dx_t$ and noting that $f(X_t)$ is a martingale, we obtain the result:
\begin{align*}
\nabla_\omega f(X_0)&=\mathbb{E}\left[g(X_T)\frac{1}{T}\int_0^T\frac{z_t}{\sigma(X_t)}\,\mathrm{d}w_t\right].
\end{align*}
\end{proof}

As in \cite{jazaerli2017functional}, we rewrite Eq. (\ref{eq21}), for any $s\in[0,T]$, as
\begin{align}\label{eq61}
\nabla_{\omega}f(Y_s)=\frac{1}{(T-s)z(Y_s)}\mathbb E\left[g(X_T)\int_s^T\frac{z_t}{\sigma(X_t)}\,\mathrm{d}w_t| Y_s\right],
\end{align}
where $z(Y_s)$ is the functional version of the tangent process $z$. Eq. (\ref{eq61}) can be proved by using similar arguments as in the proof of Theorem \ref{delta} with minor modifications.

\subsubsection{Gamma}
The Gamma is defined as the second derivative of the option price with respect to the initial price. If $f(X_t)$ denotes the option at time $t$, its Gamma is given by $\nabla_\omega^2f(X_t)$.
\begin{asp}\label{3}
$\mathcal{D}\sigma=\mathcal{D}(\nabla_\omega\sigma)=0$ in $\Lambda_T$.
\end{asp}
This assumption is satisfied by time-homogeneous local volatility models, see \eqref{eq11}. Then we assume that the tangent process given by \eqref{eq25} satisfies the following derivative assumptions.
\begin{asp}\label{4}
We consider the functional $z$ such that $z(X_t)=z_t$:
\begin{enumerate}
    \item $\mathcal{D}z_t=0$,
    \item $\nabla_\omega z_t=\frac{\nabla_\omega\sigma(X_{t})}{\sigma(X_{t})}z(X_{t})$,
    \item $\nabla_\omega^2z_t=0$.
\end{enumerate}
\end{asp}

Now, the following result gives the Gamma of the price.
\begin{thm}\label{gamma}
Let $x_t$ be the solution of (\ref{eq73}) on the interval $[0,T]$ and $g:\Lambda_T\to\mathbb{R}$ denote some bounded function. Suppose the price function is given by (\ref{eq13}). In addition, suppose that assumptions \ref{3} and \ref{4} hold. Then
\begin{align}\label{eq26}
\nabla_\omega^2f(X_s)=\mathbb{E}\left[g(X_T)\pi_{s,T}^{\Gamma}\big| X_s\right],
\end{align}
where
\begin{align}\label{puyot5}\pi_{s,T}^{\Gamma}=\frac{(\pi_T-\pi_s)^2}{(T-s)z_s^2}-
\frac{\nabla_\omega\sigma(X_s)}{\sigma(X_s)}\frac{(\pi_T-\pi_s)}{(T-s)z_s}-\frac{1}{(T-s)\sigma^2(X_s)}\end{align} with $\pi$ given by $\displaystyle\pi_s=\int_0^s\frac{z_t}{\sigma(X_t)}\mathrm{d}W_t$.
\end{thm}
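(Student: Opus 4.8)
The plan is to obtain the Gamma by differentiating the localized Delta formula \eqref{eq61} one further time in the vertical direction, exactly as the Delta was obtained by differentiating the martingale relation \eqref{eq23}. Writing \eqref{eq61} with the process $\pi$ of the statement, so that $\int_s^T \frac{z_t}{\sigma(X_t)}\,\mathrm{d}w_t = \pi_T-\pi_s$, the starting point is
\[
(T-s)\,z_s\,\nabla_\omega f(Y_s)=\mathbb{E}\!\left[g(X_T)(\pi_T-\pi_s)\mid Y_s\right].
\]
Since $\nabla_\omega^2 f=\nabla_\omega(\nabla_\omega f)$, I would apply $\nabla_\omega$ to this identity and use the product rule, separating the contribution of the deterministic-in-path prefactor $\tfrac{1}{(T-s)z_s}$ from that of the conditional expectation.

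For the prefactor, Assumption \ref{4}(2) gives $\nabla_\omega z_s=\frac{\nabla_\omega\sigma(X_s)}{\sigma(X_s)}z_s$, hence $\nabla_\omega\!\big(\tfrac{1}{(T-s)z_s}\big)=-\frac{\nabla_\omega\sigma(X_s)}{\sigma(X_s)}\tfrac{1}{(T-s)z_s}$; feeding \eqref{eq61} back into this term reproduces the cross term $-\frac{\nabla_\omega\sigma(X_s)}{\sigma(X_s)}\frac{\pi_T-\pi_s}{(T-s)z_s}$ of $\pi^{\Gamma}_{s,T}$ in \eqref{puyot5}. For the conditional expectation I would exploit that $\pi_s$ is $\mathcal{F}_s$-measurable whereas $\pi_T$ encodes the forward evolution, splitting
\[
\mathbb{E}\!\left[g(X_T)(\pi_T-\pi_s)\mid Y_s\right]=\mathbb{E}\!\left[g(X_T)\pi_T\mid Y_s\right]-\pi_s\, f(Y_s).
\]
The first piece is handled by the same Delta representation applied to the auxiliary terminal payoff $g(X_T)\pi_T$ (justified because, under Assumptions \ref{3} and \ref{4}, the associated price functional remains locally weakly path-dependent, so its Lie bracket vanishes and the argument of Theorem \ref{delta} applies verbatim), yielding $\frac{1}{(T-s)z_s}\mathbb{E}[g(X_T)\pi_T(\pi_T-\pi_s)\mid Y_s]$. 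The second piece is differentiated by the product rule together with the inverse-of-stochastic-integral property of $\nabla_\omega$ from \eqref{eq24}: since $\pi_s=\int_0^s \frac{z_t}{\sigma^2(X_t)}\,\mathrm{d}x^c_t$, one gets $\nabla_\omega\pi_s=\frac{z_s}{\sigma^2(X_s)}$, which is the source of the Itô-correction term $-\frac{1}{(T-s)\sigma^2(X_s)}$.

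Finally I would assemble the pieces: the terms $\pi_T(\pi_T-\pi_s)$ coming from the forward part and $-\pi_s(\pi_T-\pi_s)$ coming from $-\pi_s f(Y_s)=-\mathbb{E}[g(X_T)\pi_s(\pi_T-\pi_s)\mid Y_s]/\dots$ combine into $(\pi_T-\pi_s)^2$, giving the leading quadratic weight, while the $\nabla_\omega\pi_s$ contribution gives the $\sigma^2$ correction and the prefactor contribution gives the cross term, so that collecting all three produces $\mathbb{E}[g(X_T)\,\pi^{\Gamma}_{s,T}\mid X_s]$ as in \eqref{eq26}. The main obstacle I anticipate is precisely the mixed measurability of the weight $\pi_T-\pi_s$: one must differentiate $\pi_s$ as a functional of the current path (producing the correction term through the duality formula) while treating $\pi_T$ through the forward Delta representation, and one must verify that the Delta formula genuinely extends to the non-Markovian payoff $g(X_T)\pi_T$, which is where Assumptions \ref{3} and \ref{4} enter to guarantee the vanishing of the relevant Lie bracket.
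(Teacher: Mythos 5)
Your proposal is correct and follows essentially the same route as the paper: the paper likewise rearranges \eqref{eq61} into the identity $\tilde f(Y_s)=(T-s)z_s\nabla_\omega f(Y_s)+\pi_s f(Y_s)$ for the auxiliary price $\tilde f(Y_s)=\mathbb{E}\left[g(X_T)\pi_T\mid Y_s\right]$, applies Theorem \ref{delta} to $\tilde f$, and then extracts $\nabla_\omega^2 f$ via the product rule using $\nabla_\omega z_s=\frac{\nabla_\omega\sigma(X_s)}{\sigma(X_s)}z_s$ and $\nabla_\omega\pi_s=\frac{z_s}{\sigma^2(X_s)}$, exactly as you do. The only substantive difference is that the paper devotes the core of its proof (equations \eqref{eq29}--\eqref{eq32}) to explicitly verifying the Lie-bracket condition $\mathfrak{L}\tilde f=0$ that you assert and defer, so a complete write-up of your plan would need to carry out that mixed-derivative computation under Assumptions \ref{3} and \ref{4}.
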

\begin{proof}
Suppose that thee exists functionals $z$ and $\pi$ such that $z(X_t)=z_t$ and $\pi(X_t)=\pi_t$ a.s. Then it follows that \[\nabla_\omega\pi(Y_t)=\frac{z(Y_t}{\sigma^2(Y_t)}, ~~~\nabla_{\omega}^2\pi(Y_t)=0,~~~\text{and}~~~ \mathcal{D}\pi(Y_t)=0.\]
 From \eqref{eq61}, we have
\begin{align}\label{eq27}
(T-s)z(Y_s)\nabla_\omega f(Y_s)+f(Y_s)\pi(Y_s)=\mathbb{E}\left[g(X_T)\pi(X_T)\big|Y_s\right].
\end{align}
Let $\tilde{g}(Y_T)=g(Y_T)\pi(Y_T)$ and $\tilde{f}(Y_s)=\mathbb{E}[\tilde{g}(X_T)\big|Y_s]$. Therefore
\begin{align}\label{eq28}
\tilde{f}(Y_s)=(T-s)z(Y_s)\nabla_\omega f(Y_s)+f(Y_s)\pi(Y_s).
\end{align}
To apply similar arguments as in the proof of Theorem \ref{delta}, we need to prove that $\mathcal L \widetilde{f}=0$. Taking the vertical and horizontal derivatives of \eqref{eq28}, respectively, gives
\begin{align}\label{eq29}
\nabla_\omega\tilde{f}(Y_s)&=(T-s)\frac{\nabla_\omega\sigma(Y_s)}{\sigma(Y_s)}z(Y_s)\nabla_\omega f(Y_s)+(T-s)z(Y_s)\nabla_{\omega}^2f(Y_s)+\nabla_\omega f(Y_s)\pi(Y_s)\nonumber\\
&\quad+f(Y_s)\frac{z(Y_s)}{\sigma^2(Y_s)}
\end{align}
and
\begin{align}\label{eq30}
\mathcal{D}\tilde{f}(Y_s)&=-z(Y_s)\nabla_\omega f(Y_s)+(T-s)z(Y_s){\mathcal D}\nabla_{\omega}f(Y_s)+\mathcal D f(Y_s)\pi(Y_s).
\end{align}
Next, we compute the mixed derivatives, that is, we take the horizontal derivative of \eqref{eq29} and the vertical derivative of \eqref{eq30}, respectively,
\begin{align}\label{eq31}
\mathcal{D}(\nabla_\omega\tilde{f})(Y_s)&=-\frac{\nabla_\omega\sigma(Y_s)}{\sigma(Y_s)}z(Y_s)\nabla_{\omega}f(Y_s)+(T-s)\frac{\mathcal D\nabla\omega\sigma(Y_s)}{\sigma(Y_s)}z(Y_s)\nabla\omega f(Y_s)\nonumber\\
&\quad -(T-s)\frac{\nabla_\omega\sigma(Y_s)}{\sigma^2(Y_s)}\mathcal D \sigma(Y_s)z(Y_s)\nabla_{\omega}f(Y_s)+(T-s)\frac{\nabla_\omega\sigma(Y_s)}{\sigma(Y_s)}z(Y_s)\mathcal D \nabla_\omega f(Y_s)\nonumber\\
&\quad -z(Y_s)\nabla_{\omega}^2f(Y_s)+(T-s)z(Y_s)\mathcal D\nabla_{\omega}^2f(Y_s)+\mathcal D\nabla_{\omega}f(Y_s)\pi(Y_s)+\mathcal D f(Y_s)\frac{z(Y_s)}{\sigma^2(Y_s)}\nonumber\\
&\quad-2f(Y_s)\frac{z(Y_s)}{\sigma^3(Y_s)}\mathcal D \sigma(Y_s)
\end{align}
and
\begin{align}\label{eq32}
\nabla_\omega(\mathcal{D}\tilde{f})(Y_s)&=-\frac{\nabla_\omega\sigma(Y_s)}{\sigma(Y_s)}z(Y_s)\nabla_\omega f(Y_s)-z(Y_s)\nabla_{\omega}^2f(Y_s)+(T-s)\frac{\nabla_\omega\sigma(Y_s)}{\sigma(Y_s)}z(Y_s)\mathcal D\nabla_\omega f(Y_s)\nonumber\\
&\quad+(T-s)z(Y_s)\nabla_\omega\mathcal D\nabla_\omega f(Y_s)+\nabla_\omega\mathcal D f(Y_s)\pi(Y_s)+\mathcal D f(Y_s)\frac{z(Y_s)}{\sigma^2(Y_s)}.
\end{align}
From \eqref{eq31} and \eqref{eq32}, we have
\[ \mathcal L\widetilde{f}(Y_s)=\nabla_\omega(\mathcal{D}\tilde{f})(Y_s)-\mathcal{D}(\nabla_\omega\tilde{f})(Y_s)=0. \]
Hence, by Theorem \ref{delta}, $\nabla_\omega\widetilde{f}(X_t)z_t$ is a local martingale.
Therefore,
\begin{align}\label{eq33}
(T-s)z_s\nabla_\omega\widetilde{f}(X_s)+\widetilde{f}(X_s)\int_0^s\frac{z_t}{\sigma(X_t)}\,\mathrm{d}w_t=\mathbb E\left[\widetilde{g}(X_T)\int_0^T\frac{z_t}{\sigma(X_t)}\,\mathrm{d}w_t\big| X_s\right].
\end{align}
From \eqref{eq29}, we have
\begin{align}\label{eq62}
\nabla_\omega\tilde{f}(X_s)&=(T-s)\frac{\nabla_\omega\sigma(X_s)}{\sigma(X_s)}z(X_s)\nabla_\omega f(X_s)+(T-s)z(X_s)\nabla_{\omega}^2f(X_s)\nonumber\\
&\quad+\nabla_\omega f(X_s)\int_0^s\frac{z_t}{\sigma(X_t)}\,\mathrm{d}w_t+f(X_s)\frac{z(X_s)}{\sigma^2(X_s)}.
\end{align}
Using the definition of $\tilde{g}(X_t)$, $f(X_t)$ and $\nabla_\omega f(X_s)$, the desired result follows from Eqs. (\ref{eq33}) and (\ref{eq62}).
\end{proof}
 \subsubsection{Vega}
 In this section, we provide an expression of the the derivative of the expectation $f(X_t)$  with respect to the volatility $v_0=\sigma^2$. 
 We consider the SDE
 \begin{equation}\label{volatility_model_vega}
   \mathrm{d}x_t=\sqrt{v_0}\,\mathrm{d}w_t+\int_{\mathbb{R}}\gamma\widetilde{N}(\mathrm{d}t,\mathrm{d}z),~~v_0=\sigma^2.
 \end{equation}
 By the functional p.d.e
 \begin{align}\label{eqa}
   \mathcal{D}f(X_t) & +\frac{1}{2}v_0\nabla_\omega^2f(X_t)+\int_{\mathbb{R}}\left\lbrace f(z,X_{t-}+\gamma)-f(z,X_{t-})-\gamma\nabla_\omega f(z,X_{t-})\right\rbrace v(\mathrm{d}z)=0.
 \end{align}
 Calculating the expression for Vega is not as straightforward as in the computation of the Delta and Gamma. Here, instead of computing the derivative of the price with respect to the volatility, we define the perturbed process with respect to the property under investigation, in this case $v_0=\sigma^2$. Let $u$ be a direction function for the diffusion such that for every $\varepsilon\in[-1,1]$, $u$ and $v_0+\varepsilon u$ are continuously differentiable with bounded first derivatives in the space direction. The functions $v_0$ and $u$ are assumed to satisfy the following condition:
 \begin{align}\label{puyot4}\exists\eta>0~~~\xi^*(v_0+\varepsilon u)^*(x)(v_0+\varepsilon u)\xi\geq\eta\parallel \xi\parallel^2~~~\forall~~\xi,x\in\mathbb R.\end{align}
 We then define the volatility-perturbed process $x_t^{\varepsilon}$ as a solution of the following perturbed stochastic differential equation
 \begin{align}\label{puyot}
 \mathrm{d}x_t^\varepsilon=\sqrt{v_0+\varepsilon u}\,\mathrm{d}w_t+\int_{\mathbb{R}}\gamma\widetilde{N}(\mathrm{d}t,\mathrm{d}z),~~x_0^\varepsilon=x_0. \end{align}
 The functional It{\^o} formula becomes
 \begin{align}\label{eqb}
   g(X_T^\varepsilon) &=f(X_T^\varepsilon)=f(X_0^\varepsilon)+\int_{0}^{T}\mathcal{D}f(X_t^\varepsilon)\,\mathrm{d}t+\int_{0}^{T}\nabla_\omega f(X_t^\varepsilon)\,\mathrm{d}[x^\varepsilon]_t^c+\frac{1}{2}\int_{0}^{T}(v_0+\varepsilon u)\nabla_\omega^2f(X_t^\epsilon)\,\mathrm{d}t \nonumber\\
   &\quad+\int_{0}^{T}\int_{\mathbb{R}}\left\lbrace f(z,X_{t-}^\varepsilon+\gamma)-f(z,X_{t-}^\varepsilon)-\gamma\nabla_\omega f(z,X_{t-}^\varepsilon)\right\rbrace v(dz)\,\mathrm{d}t\nonumber\\
   &\quad+\int_{0}^{T}\int_{\mathbb{R}}\left\lbrace f(z,X_{t-}^\varepsilon+\gamma)-f(z,X_{t-}^\varepsilon)\right\rbrace\widetilde{N}(\mathrm{d}t,\mathrm{d}z)\nonumber\\
   &=f(X_0^{\varepsilon})+\int_{0}^{T}\nabla_\omega f(X_t^\varepsilon)\,\mathrm{d}[x^\varepsilon]_t^c+\frac{\varepsilon}{2}\int_{0}^{T}u\nabla_\omega^2f(X_t^\varepsilon)\,\mathrm{d}t\nonumber\\
   &\quad+\int_{0}^{T}\int_{\mathbb{R}}\left\lbrace f(z,X_{t-}^\varepsilon+\gamma)-f(z,X_{t-}^\varepsilon)\right\rbrace\widetilde{N}(\mathrm{d}t,\mathrm{d}z)\nonumber\\
 \end{align}
 where the second equality follows by \eqref{eqa}.

 We can relate to the the perturbed process (\ref{puyot}) the perturbed price $f(X_t^{\varepsilon})$ defined by
 \begin{equation}\label{puyot1}
 f(X_t^{\varepsilon})=\mathbb E\left[g(X_T^{\varepsilon})\mid X_t^{\varepsilon}\right].
 \end{equation}
 The Vega is defined, using the Fr{\'e}chet derivative, as
 \begin{equation}\label{eqc}
  \text{Vega}:= \left\langle\nabla_{u}f,u\right\rangle=\lim_{\varepsilon\to0}\frac{f(X_t^{\varepsilon})-f(X_t)} {\varepsilon}.
 \end{equation}
Under similar assumptions to the case of the gamma, the following result gives the expression for the Vega.
\begin{thm}\label{vega}
Assume that condition (\ref{puyot4}) and the hypotheses of theorem \ref{gamma} hold. Then the Vega satisfies
\begin{align}
\left\langle\nabla_{u}f,u\right\rangle=\mathbb{E}\left[g(X_T)\frac{1}{2}\int_0^Tu(X_t)\pi_{t,T}^{\Gamma}\,\mathrm{d}t\right],
\end{align}
where $\pi_{t,T}^{\Gamma}$ is given by (\ref{puyot5}).
\end{thm}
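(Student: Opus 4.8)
The plan is to build on the first-order expansion already recorded in \eqref{eqb}. After the unperturbed pricing p.d.e.\ \eqref{eqa} has been used to cancel the drift, the only surviving bounded-variation contribution is $\tfrac{\varepsilon}{2}\int_0^T u\,\nabla_\omega^2 f(X_t^\varepsilon)\,\mathrm{d}t$, while the integrals against $\mathrm{d}w_t$ and $\widetilde{N}(\mathrm{d}t,\mathrm{d}z)$ are local martingales. First I would take the conditional expectation of \eqref{eqb} given $X_0$. Since $X_0^\varepsilon=X_0$, the martingale integrals have zero conditional mean, and using the terminal identity $f(X_T^\varepsilon)=g(X_T^\varepsilon)$ (here $f(Y_t)=\mathbb{E}[g(X_T)\mid Y_t]$ is the \emph{unperturbed} price functional solving \eqref{eqa}) one is left with
\[
\mathbb{E}\!\left[g(X_T^\varepsilon)\mid X_0\right]-f(X_0)=\frac{\varepsilon}{2}\,\mathbb{E}\!\left[\int_0^T u(X_t^\varepsilon)\,\nabla_\omega^2 f(X_t^\varepsilon)\,\mathrm{d}t\,\Big|\,X_0\right].
\]

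Next I would divide by $\varepsilon$ and send $\varepsilon\to0$. The left-hand side is precisely the difference quotient defining the Vega in \eqref{eqc}, with $f(X_t^\varepsilon)$ as in \eqref{puyot1}, so in the limit
\[
\langle\nabla_{u}f,u\rangle=\frac{1}{2}\,\mathbb{E}\!\left[\int_0^T u(X_t)\,\nabla_\omega^2 f(X_t)\,\mathrm{d}t\right].
\]
At this point I would invoke the Gamma representation of Theorem \ref{gamma}, that is $\nabla_\omega^2 f(X_t)=\mathbb{E}[g(X_T)\,\pi_{t,T}^{\Gamma}\mid X_t]$ from \eqref{eq26}, which is legitimate because the hypotheses of Theorem \ref{gamma} (Assumptions \ref{3} and \ref{4}) are in force. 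Substituting, and noting that $u(X_t)$ is $\mathcal{F}_t$-measurable, the tower property yields $\mathbb{E}[u(X_t)\,\nabla_\omega^2 f(X_t)]=\mathbb{E}[u(X_t)\,g(X_T)\,\pi_{t,T}^{\Gamma}]$ for each fixed $t$; a Fubini interchange of $\int_0^T\mathrm{d}t$ with the expectation then factors out $g(X_T)$ and produces the claimed formula with $\pi_{t,T}^{\Gamma}$ given by \eqref{puyot5}.

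The main obstacle is the passage to the limit under the expectation, namely justifying that
\[
\lim_{\varepsilon\to0}\mathbb{E}\!\left[\int_0^T u(X_t^\varepsilon)\,\nabla_\omega^2 f(X_t^\varepsilon)\,\mathrm{d}t\right]=\mathbb{E}\!\left[\int_0^T u(X_t)\,\nabla_\omega^2 f(X_t)\,\mathrm{d}t\right].
\]
This needs the continuous dependence of the perturbed flow $X^\varepsilon$ from \eqref{puyot} on the coefficient $\sqrt{v_0+\varepsilon u}$, together with a dominated-convergence bound uniform in $\varepsilon\in[-1,1]$. Here the uniform ellipticity \eqref{puyot4} is decisive: it keeps $v_0+\varepsilon u$ bounded away from zero for all such $\varepsilon$, so that the first-variation process and the weight $\pi_{t,T}^{\Gamma}$ (which contain $1/\sigma^2$ and $1/z_s^2$) stay integrable and supply the dominating bound. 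The boundedness of $g$ and the standing growth and Lipschitz conditions on the coefficients then provide the integrability needed to validate the tower-property and Fubini steps.
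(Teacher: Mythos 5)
Your proposal is correct and follows essentially the same route as the paper: both extract the Vega as the $O(\varepsilon)$ coefficient $\tfrac{1}{2}\int_0^T u\,\nabla_\omega^2 f\,\mathrm{d}t$ from the expansion \eqref{eqb} (after the drift is cancelled via \eqref{eqa}) and then substitute the Gamma representation \eqref{eq26} of Theorem \ref{gamma}. Your version is in fact tighter than the paper's proof: you take conditional expectations up front so the martingale terms visibly vanish and the tower/Fubini step is explicit, and you flag the dominated-convergence/uniform-ellipticity argument needed to pass $\varepsilon\to0$ under the expectation --- points the paper's proof leaves implicit.
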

\begin{proof}
Under small perturbations, $\varepsilon \to 0$, the dominant term in the perturbed price $f(X_t^{\varepsilon})$ is:
\begin{align*}
    f(X_t^{\varepsilon}) = f(X_t) + \frac{\varepsilon}{2} \int_0^T u(X_t) \nabla_\omega^2f(X_t) \, \mathrm{d}t.
\end{align*}
The Vega as a Fr{\'e}chet derivative is defined as:
%
%
\begin{align}\label{abv}
\left\langle\nabla_{u}f,u\right\rangle&=\lim_{\varepsilon\rightarrow0}\frac{\mathbb E[g(X_T^{\varepsilon})\mid X_t^{\varepsilon}]-\mathbb E[g(X_T)\mid X_t]}{\varepsilon}.
\end{align}
It follows from the expression of $f(X_t^{\varepsilon})$ that
\[ \langle \nabla_uf,u\rangle = \frac{1}{2} \int_0^T u(X_t) \nabla_\omega^2f(X_t) \, \mathrm{d}t. \]
Assume that the conditions of Theorem \ref{gamma} hold, and noting that
\[\nabla_\omega^2f(X_t)=\mathbb{E}\left[g(X_T)\pi_{t,T}^{\Gamma}\big| X_t\right], \]
we have
\begin{align}
    \langle \nabla_uf,u\rangle = \frac{1}{2} \mathbb{E} \bigg[ g(X_T) \int_0^T u(X_t) \pi_{t,T}^\Gamma \, \mathrm{d}t \bigg].
\end{align}
\end{proof}

\section{Applications}
This section deals with the applications of the previously derived formulas. In particular the section deals with the estimation of Delta, Gamma and Vega of a jump diffusion model  in which the interest rate is assumed to be zero. We focus on the digital option. However, the application could be extended to other discontinuous payoff functions.
We consider the stochastic differential equation
\[\mathrm{d}x_t=\sigma x_t\,\mathrm{d}w_t+\int_{\mathbb R}z\widetilde{N}(\mathrm{d}t,\mathrm{d}z).\]
A direct application of Theorem \ref{delta} yields
\[ \nabla_\omega f(X_0)=\mathbb E\left[g(X_T)\frac{1}{T}\int_0^T\frac{z_t}{\sigma x_t}\,\mathrm{d}w_t\right], \]
where $z_t=\frac{x_t}{x_0}$. Therefore, for a digital option of the type
\[g(X_T)=1_{K<X_T}\]
where $K$ is the exercise price, the Delta is given by
\[\nabla_\omega f(X_0)=\mathbb E\left[1_{K<X_T}\frac{w_T}{x\sigma T}\right]. \]
For Gamma, the application of Theorem \ref{gamma} yields
\[\nabla_\omega^2 f(X_0)=\mathbb E\left[1_{K<X_T}\frac{1}{\sigma x^2 T}\left(\frac{w_T^2}{\sigma T}-w_T-\frac{1}{\sigma}\right)\right].\]
For Vega, the application of Theorem \ref{vega} yields
\[\nabla_\varepsilon f(X_0)=\mathbb E\left[1_{K<X_T}\frac{1}{2}\sigma\left(\frac{w_T^2}{\sigma T}-w_T-\frac{1}{\sigma}\right)\right].\]


\section{conclusion}
Modeling option prices through partial integro-differential equations provides a powerful framework for employing efficient numerical techniques in pricing single-asset options with jumps. This connection has been widely explored by researchers, leading to the development of various numerical approaches for valuing options in jump-diffusion models.

This paper investigates pathwise functional It{\^o} calculus for non-anticipative functionals, establishing a pathwise change of variable formula for functionals of continuous paths. By utilising quadratic variation over a sequence of partitions, we derive a functional pricing partial differential differential equation for path-dependent claims. For the computation of Greeks, we employ the Functional It{\^o} formula together with the Lie bracket property. Explicit analytic expressions for Delta, Gamma, and Vega are derived, with results showing that these sensitivities can be represented as expectations of a product of the payoff functional and a weight function. Under the B-S model, it is observed that Delta is not a martingale, a consequence of modeling stock prices through their tangent process and incorporating path-dependence in derivative pricing.

Future work should explore extensions to American options, stochastic volatility models, and multi-asset options, broadening the applicability of Functional It{\^o} calculus in mathematical finance.

%

 \end{document}